\newcommand{\GG}{{\cal G}}
\newcommand{\pll}{\text{polylog}}
\newtheorem{theorem}{Theorem}
\newtheorem{corollary}[theorem]{Corollary}
\newtheorem{lemma}[theorem]{Lemma}
\newcommand{\coTW}{\mbox{coTW}}
\newcommand{\tw}{\mbox{tw}}
\title{On classes of graphs with strongly sublinear separators}
\author{Zden\v{e}k Dvo\v{r}\'ak\thanks{Computer Science Institute, Charles University, Prague, Czech Republic. E-mail: {\tt rakdver@iuuk.mff.cuni.cz}.
Supported by the Center of Excellence -- Inst. for Theor. Comp. Sci., Prague (project P202/12/G061 of Czech Science Foundation).}}
\date{}
\begin{document}
\maketitle

\begin{abstract}
For real numbers $c,\varepsilon>0$, let $\GG_{c,\varepsilon}$ denote the class of graphs $G$ such that each subgraph $H$ of $G$
has a balanced separator of order at most $c|V(H)|^{1-\varepsilon}$.  A class $\GG$ of graphs has \emph{strongly sublinear separators}
if $\GG\subseteq \GG_{c,\varepsilon}$ for some $c,\varepsilon>0$.
We investigate properties of such graph classes, leading in particular to an approximate algorithm to determine membership in
$\GG_{c,\varepsilon}$: there exist $c'>0$ such that for each input graph $G$, this algorithm in polynomial
time determines either that $G\in \GG_{c',\varepsilon^2/160}$, or that $G\not\in \GG_{c,\varepsilon}$.
\end{abstract}

A \emph{balanced separator} in a graph $G$ is a set $C\subseteq V(G)$ such that each component of $G-C$ has at most $\tfrac{2}{3}|V(G)|$
vertices (the constant $\tfrac{2}{3}$ is customary but basically arbitrary, any constant smaller than $1$ would give qualitatively the same results).
Balanced separators of small order are of obvious importance in the construction of efficient divide-and-conquer style algorithms~\cite{lt80}.
In these applications, all subgraphs that appear throughout the recursion are required to have small balanced separators.
This motivates us to ask which graphs admit small balanced separators in all their subgraphs.

For real numbers $c>0$ and $0<\varepsilon\le 1$, let $\GG_{c,\varepsilon}$ denote the class of graphs $G$ such that each subgraph $H$ of $G$
has a balanced separator of order at most $c|V(H)|^{1-\varepsilon}$.  A class $\GG$ of graphs has \emph{strongly sublinear separators}
if $\GG\subseteq \GG_{c,\varepsilon}$ for some $c,\varepsilon>0$.  Note that if $c'\le c$ and $\varepsilon'\ge \varepsilon$, then
$\GG_{c',\varepsilon'}\subseteq\GG_{c,\varepsilon}$.  Possibly the best known example of a class with
strongly sublinear separators is the class of planar graphs---Lipton and Tarjan~\cite{lt79} proved that all planar graphs belong
to $\GG_{\sqrt{8},1/2}$ (the constant $1/2$ is tight as shown by planar grids, the constant $\sqrt{8}$ has been subsequently
improved~\cite{betsep}).  More generally, Gilbert et al.~\cite{gilbert} proved that for every surface $\Sigma$ there exists $c>0$ such that
all graphs that can be drawn in $\Sigma$ without crossings belong to $\GG_{c,1/2}$.
All graphs of treewidth at most $t$ belong to $\GG_{t+1,1}$ (and conversely,
Dvo\v{r}\'ak and Norin~\cite{dnorin} proved that all graphs in $\GG_{c,1}$ have treewidth at most $15c$).
Generalizing all these results, Alon et al.~\cite{alon1990separator} and Kawarabayashi and Reed~\cite{kreedsep}
proved that all proper minor-closed classes have strongly sublinear separators.  Furthermore, many geometrically motivated graph classes
(e.g., graphs embeddable in a finite-dimensional Euclidean space with bounded distortion of distances) have this property.

In this paper, we study the question of whether classes of graphs with strongly sublinear separators
admit a common structural description, by leveraging the connection to classes with polynomial expansion (see Section~\ref{sec-prs}
for a definition) discovered by Dvo\v{r}\'ak and Norin~\cite{dvorak2016strongly}.

We start by exploring the consequences of an algorithm to find small balanced separators
by Plotkin, Rao, and Smith~\cite{plotkin} in view of this result, noting a connection
to nowhere-dense classes (Lemma~\ref{lemma-equiv}) and proving that presence of strongly sublinear separators
in all relatively small subgraphs implies their presence in the whole graph (Corollary~\ref{cor-smtobig}).
Section~\ref{sec-prs} contains the relevant definitions and results.

We continue by generalizing the Plotkin-Rao-Smith algorithm, obtaining its weighted variant where
the balanced separators are bounded in terms of prescribed costs of vertices, rather than just
their size (Theorem~\ref{thm-prsgen}).  Through LP duality, we show this implies that every $n$-vertex graph with
strongly sublinear separators contains many small, ``almost disjoint'' subsets of vertices whose
removal makes the treewidth of the rest of the graph polylogarithmic in $n$ (Theorem~\ref{thm-thin}).
This establishes a link to previously studied stronger notion of fractional treewidth-fragility~\cite{twd}.
The weaker property we obtain is sufficient to certify existence of strongly sublinear separators in
all subgraphs whose size is at least polylogarithmic in $n$ (Lemma~\ref{lemma-largesepar}).
These claims are made precise in Section~\ref{sec-frag}.

In Section~\ref{sec-cert}, we show that this property also implies a polynomial-time
algorithm to test presence of given subgraphs of order $O(\log n/\log\log n)$ in graphs with strongly sublinear
separators (Lemma~\ref{lemma-test}).  We use this algorithm together with previously mentioned Corollary~\ref{cor-smtobig} to
obtain a polynomial-time test for presence of strongly sublinear separators in subgraphs
whose size is at most polylogarithmic in $n$, complementing Lemma~\ref{lemma-largesepar}.
Together, this gives an approximate algorithm to verify presence of strongly sublinear separators
in all subgraphs (Theorem~\ref{thm-certif}), as described in the abstract.

Finally, Section~\ref{sec-proofs} is devoted to the proofs of Theorems~\ref{thm-prsgen} and \ref{thm-thin},
postponed in order not to break the flow of the presentation.

\section{Plotkin-Rao-Smith algorithm and its consequences}\label{sec-prs}

A quite general argument for obtaining small balanced separators (which gives another proof that
proper minor-closed classes have strongly sublinear separators) was obtained by Plotkin, Rao, and Smith~\cite{plotkin}.
Let us state their result in a slightly reformulated way, seen to hold by an inspection of their proof
(we do not provide details, as we anyway prove more general Theorem~\ref{thm-prsgen} later).
A \emph{model} of a clique $K_m$ in a graph $G$ is a system $\{V_1, \ldots, V_m\}$
of pairwise-disjoint subsets of $V(G)$ such that $G[V_i]$ is connected for $1\le i\le m$ and $G$ contains an edge with
one end in $V_i$ and the other end in $V_j$ for $1\le i<j\le m$.  The \emph{support} of the model is $V_1\cup \ldots V_m$.
The model has \emph{depth} $d$ if each of the subgraphs
$G[V_1]$, \ldots, $G[V_m]$ has radius at most $d$.
Let $\omega_d(G)$ denote the maximum integer $m$ such that $G$ contains a model of $K_m$ of depth $d$.
For $m_0\ge m$, a model of depth $d$ is \emph{$m_0$-bounded} if
$|V_i|\le m_0d$ for $i=1,\ldots,m$, and in particular the support of the model has size at most $m_0md$.

\begin{theorem}[Plotkin, Rao, and Smith~\cite{plotkin}]\label{thm-prs}
There exists a non-decreasing function $d:\mathbf{N}^2\to\mathbf{N}$ satisfying $d(\ell,n)=O(\ell\log n)$ and a polynomial-time algorithm that,
given an $n$-vertex graph $G$ and integers $\ell,m_0\ge 1$,
returns either an $m_0$-bounded model of $K_{m_0}$ of depth $d\colonequals d(\ell,n)$ in $G$, or
disjoint sets $C,M\subseteq V(G)$ such that
\begin{itemize}
\item $C\cup M$ is a balanced separator in $G$,
\item $|C|\le n/\ell$, and
\item for some $m\le \min(m_0,\omega_d(G))$, $M$ is the support of a $\min(m_0,\omega_d(G))$-bounded model of $K_m$ of depth $d$ in $G$.
\end{itemize}
\end{theorem}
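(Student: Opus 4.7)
The plan is to follow the Plotkin--Rao--Smith region-growing scheme, building the clique model one branch set at a time via BFS inside the current ``active'' component, and harvesting each thin BFS layer into the cut $C$ whenever an extension is about to blow up. As the author remarks, only a reformulation of the original PRS proof is needed here, so the task is really to verify that the PRS argument, with small adjustments, yields the three bullet points in the stated form.

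At each stage I maintain a partial model $\{V_1,\ldots,V_{m-1}\}$ of $K_{m-1}$ of depth at most $d$ with each $|V_i|\le m_0 d$, a cut set $C$ (initially empty), and the largest component $U$ of $G-(C\cup V_1\cup\cdots\cup V_{m-1})$. If $|U|\le\tfrac{2}{3}n$ then $C\cup M$, with $M=V_1\cup\cdots\cup V_{m-1}$, is a balanced separator and I stop. If $m-1=m_0$ I return the completed $m_0$-bounded model. Otherwise I try to build $V_m$ inside $U$ adjacent to every previous $V_j$ by running BFS from a root in $U$ and classifying each successive layer as \emph{thin} (size at most $n/(\ell m_0 d)$) or \emph{thick}. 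If the first $d=O(\ell\log n)$ layers are all thick, the ball would have more than $n$ vertices, so along the way it must have met each of $V_1,\ldots,V_{m-1}$; the union of the $m-1$ BFS shortest paths from the root to one neighbour of each $V_j$ is then a connected Steiner tree of radius at most $d$ and size at most $m_0 d$ that I record as the new $V_m$. If a thin layer appears first, I add it to $C$, update $U$, and restart the iteration.

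The central accounting is that the thin-layer threshold $n/(\ell m_0 d)$, combined with a potential argument bounding the total number of harvest events across all iterations, yields $|C|\le n/\ell$ throughout, while the thickness condition forces geometric growth of each BFS ball and so the depth $d=O(\ell\log n)$ suffices for every successful extension. When the algorithm halts in the separator branch with $m-1<m_0$, the partial model itself witnesses $m-1\le\omega_d(G)$, which together with $m-1\le m_0$ gives the required $m\le\min(m_0,\omega_d(G))$; by capping each branch set's growth adaptively at $\min(m_0,\omega_d(G))\cdot d$ (guessing this value and doubling when necessary), one also upgrades $m_0$-boundedness to $\min(m_0,\omega_d(G))$-boundedness as stated.

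The main obstacle is this delicate bookkeeping: one must simultaneously (a) bound $|C|$ by $n/\ell$ across all iterations and restarts, (b) bound the BFS depth per extension by $d=O(\ell\log n)$, and (c) ensure that each new branch set reaches every previous branch set within the allotted depth. The PRS potential argument is built precisely for this trade-off, so I would follow its template; checking that the minor modifications (returning a partial model on failure and adaptively capping branch-set sizes) preserve correctness is then routine, and in fact the more general Theorem~\ref{thm-prsgen} proved later in the paper subsumes this bookkeeping in one go.
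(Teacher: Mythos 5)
Your high-level plan matches the PRS scheme that the paper generalizes (via Lemma~\ref{lemma-seporexp} and the proof of Theorem~\ref{thm-prsgen}), but the central step as you have written it does not work. You classify a BFS layer as thin if it has size at most $n/(\ell m_0 d)$ (an \emph{absolute} threshold) and then claim that $d = O(\ell\log n)$ consecutive thick layers would force the ball to exceed $n$ vertices. With that threshold, $d$ thick layers only give a ball of size $> d\cdot n/(\ell m_0 d) = n/(\ell m_0)$, which is nowhere near $n$; to force a contradiction you would need $\ell m_0 d$ layers, and that is far more than $d = O(\ell\log n)$. So the depth bound does not follow. The actual PRS/paper argument uses a \emph{relative} threshold (Lemma~\ref{lemma-seporexp}): a layer $V_i$ is thin if $q(V_i) \le q(S_i)/\ell_0$ \emph{and} $q(V_i) \le q(L_i)/\ell_0$, where $S_i$ and $L_i$ are the accumulated inside and outside. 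When no such thin layer exists, the inside grows by a factor $(1+1/\ell_0)$ or the outside shrinks by a factor $(1-1/\ell_0)$ at every step, and this geometric behaviour is exactly what yields radius $O(\ell_0\log(rn))$.

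The $|C|\le n/\ell$ accounting also has a gap. With an absolute threshold you would need to bound the total number of harvest events by $m_0 d$ to land at $n/\ell$, and nothing in your sketch controls that number (a harvested thin layer may barely shrink the active component). The paper instead maintains an invariant (invariant (ii) in its proof): the boundary $C_1$ between the ``given-up'' region $A$ and the remainder $R$ always satisfies $q(C_1)\le q(A)/(2\ell)$. When Lemma~\ref{lemma-seporexp} produces $C_2,D,E$ with $q(C_2)\le q(D)/(2\ell)$, the smaller side $D$ is pushed into $A$, so the boundary stays proportionally small no matter how many harvests occur. Your sketch discards the thin layer into $C$ and restarts from the largest remaining component, which does not set up any such potential. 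Finally, the claim that the BFS in $U$ ``must have met each of $V_1,\ldots,V_{m-1}$'' needs to be stated more carefully: $U$ is disjoint from the $V_j$, so what the BFS must reach are the neighbourhoods $N_j = N(V_j)\cap U$, and one must first dispose of the case $N_j=\emptyset$ (the paper moves such $M_j$ into $A$). In short, the correct engine is a relative-threshold expand-or-cut dichotomy with a give-up-the-smaller-side potential, which is exactly what Lemma~\ref{lemma-seporexp} together with the invariants (i)--(iii) in the proof of Theorem~\ref{thm-prsgen} provide, and your absolute-threshold version does not.
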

Note that the separator constructed in the previous theorem (with $m_0=n+1$, so that the first conclusion does not
apply) has order at most $n/\ell+d(\ell,n)\omega^2_{d(\ell,n)}(G)=O\bigl(n/\ell+\omega^2_{d(\ell,n)}(G)\ell\log n\bigr)$;
if $G$ is from a proper minor-closed class, then $\omega_d(G)$ is bounded by a constant, and thus we obtain a separator
of order $O(\sqrt{n\log n})$ by setting $\ell=\Theta(\sqrt{n/\log n})$.

The fact that Theorem~\ref{thm-prs} only restricts minors of bounded depth connects strongly sublinear separators
to another concept: \emph{bounded expansion}.  For an integer $r\ge 0$, let $\nabla_r(G)$ denote the
maximum of average degrees of graphs that can be obtained from subgraphs of $G$ by contracting vertex-disjoint
subgraphs of radius at most $r$ (clearly, $\nabla_r(G)\ge\omega_r(G)-1$).  For a function $f$, we say that a class of graphs $\GG$
has \emph{expansion bounded by $f$} if for every integer $r\ge 1$ and every $G\in \GG$, $\nabla_r(G)\le f(r)$;
we say that $\GG$ has \emph{$\omega$-expansion bounded by $f$} if for all such $r$ and $G$, $\omega_r(G)\le f(r)$.
If such a function $f$ exists, then $\GG$ has \emph{bounded expansion} or is \emph{nowhere-dense}, respectively.
We say that $\GG$ has \emph{polynomial expansion} or \emph{polynomial $\omega$-expansion}, respectively,
if this is the case for some polynomial $f$.  

Starting with their introduction by Ne\v{s}et\v{r}il and Ossona de Mendez~\cite{grad1,grad2,npom-nd1},
the notions of bounded expansion and nowhere-density played important roles as models of sparse graph
classes due to their numerous structural and algorithmic properties.
We refer the reader to their book~\cite{nesbook} for a detailed treatment of the theory.
Relevantly to us, Dvo\v{r}\'ak
and Norin~\cite{dvorak2016strongly} proved that a class has strongly sublinear separators if and only if
it has polynomial expansion.  Theorem~\ref{thm-prs} also implies this is equivalent to having
polynomial $\omega$-expansion.
\begin{lemma}\label{lemma-equiv}
For any class $\GG$, the following holds.
\begin{itemize}
\item[\textrm{(a)}] If $\GG$ has $\omega$-expansion bounded by a function $f(r)=O\bigl(r^{1/(2\varepsilon)-2}\bigr)$ for some real number $0<\varepsilon\le 1/4$,
then for some $c>0$, $\GG\subseteq\GG_{c,\varepsilon}$.
\item[\textrm{(b)}] If $\GG\subseteq \GG_{c,\varepsilon}$ for some $c,\varepsilon>0$, then for every $\delta>0$, $\GG$ has expansion bounded by a function
$f(r)=O\bigl(r^{1/\varepsilon+\delta}\bigr)$.
\end{itemize}
In particular, the following claims are equivalent.
\begin{itemize}
\item $\GG$ has polynomial expansion.
\item $\GG$ has polynomial $\omega$-expansion.
\item $\GG$ has strongly sublinear separators.
\end{itemize}
\end{lemma}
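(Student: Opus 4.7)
The plan is to prove parts (a) and (b) separately and then derive the three equivalences. Part (a) will be a direct application of Theorem~\ref{thm-prs}, and part (b) will specialize the Dvo\v{r}\'ak--Norin equivalence~\cite{dvorak2016strongly} with attention to the exponent.

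For (a), I would fix any subgraph $H$ of a graph $G\in\GG$ and let $n_H=|V(H)|$. Applying Theorem~\ref{thm-prs} to $H$ with $m_0=n_H+1$ (so the first alternative cannot occur, as one cannot pack $n_H+1$ nonempty disjoint vertex sets into $V(H)$) and with $\ell=\lceil n_H^{\varepsilon}\rceil$ produces a balanced separator of order at most
\[
\frac{n_H}{\ell}+d\cdot\omega_d^2(H),\qquad d=d(\ell,n_H)=O(\ell\log n_H).
\]
Because a model in $H$ is also a model in $G$ of no greater depth, $\omega_d(H)\le\omega_d(G)=O(d^{1/(2\varepsilon)-2})$, and hence $d\cdot\omega_d^2(H)=O(d^{1/\varepsilon-3})=O\bigl(n_H^{1-3\varepsilon}(\log n_H)^{1/\varepsilon-3}\bigr)$. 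Combined with $n_H/\ell=O(n_H^{1-\varepsilon})$, this is $O(n_H^{1-\varepsilon})$ once $n_H$ exceeds a threshold $n_0=n_0(\varepsilon)$; for $n_H<n_0$ the trivial separator $V(H)$ suffices after enlarging $c$.

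For (b), I would invoke the Dvo\v{r}\'ak--Norin proof that strongly sublinear separators imply polynomial expansion and extract the specific exponent. Their argument is by contrapositive: from a sufficiently dense $r$-shallow minor in a graph of $\GG$ one extracts a subgraph of large minimum degree, lifts it back to a subgraph of $G$ whose size is at most a factor of roughly $r$ larger, and argues that the separator bound guaranteed by $\GG_{c,\varepsilon}$ on this lift is incompatible with the large minimum degree unless $\nabla_r$ grows no faster than roughly $r^{1/\varepsilon}$. The slack $\delta$ in the exponent absorbs polylogarithmic factors and the loss in passing from average to minimum degree. This quantitative tracking is the main technical obstacle; the conceptual content is already in~\cite{dvorak2016strongly}.

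The three equivalences now follow. Polynomial expansion implies polynomial $\omega$-expansion because $\omega_r\le\nabla_r+1$. Polynomial $\omega$-expansion implies strongly sublinear separators by (a), since any polynomial bound on $\omega_r$ is of the form $O(r^{1/(2\varepsilon')-2})$ for some $\varepsilon'>0$ sufficiently small (indeed, for any $\varepsilon'$ with $1/(2\varepsilon')-2$ exceeding the degree of the polynomial, provided $\varepsilon'\le 1/4$). Finally, strongly sublinear separators imply polynomial expansion by (b).
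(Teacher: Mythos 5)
Your part~(a) matches the paper's proof: apply Theorem~\ref{thm-prs} with $m_0=n_H+1$ and $\ell=\Theta(n_H^{\varepsilon})$, use the $\omega$-expansion hypothesis to bound $\omega_d$, and observe that $d\cdot\omega_d^2=O(d^{1/\varepsilon-3})=O(n_H^{1-3\varepsilon}\pll(n_H))=O(n_H^{1-\varepsilon})$. The paper packages the quantification over subgraphs by assuming without loss of generality that $\GG$ is subgraph-closed, whereas you treat an arbitrary subgraph $H$ directly; these are equivalent. Your closing argument for the three-way equivalence (expansion bounded by $f$ implies $\omega$-expansion bounded by $f+1$, then (a) and (b)) is also exactly the paper's.

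For part~(b), however, the paper does not attempt a proof at all: it simply cites Esperet and Raymond~\cite{espsublin}, and explicitly notes that their result \emph{strengthened} the bound originally given by Dvo\v{r}\'ak and Norin~\cite{dvorak2016strongly}. Your proposal claims the exponent $1/\varepsilon+\delta$ can be obtained from the Dvo\v{r}\'ak--Norin argument by "extracting the specific exponent," with the "conceptual content already" in that paper and only "quantitative tracking" remaining. This misstates the situation: the Dvo\v{r}\'ak--Norin proof gives a polynomial bound with a considerably worse exponent, and pushing it down to $r^{1/\varepsilon+\delta}$ is precisely the contribution of Esperet and Raymond, which is not mere bookkeeping. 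So your (b) is not a proof, and the implied claim that careful tracking of the Dvo\v{r}\'ak--Norin argument suffices is incorrect. Since the paper itself defers to a citation here, the discrepancy in rigor between your write-up and the paper's is small, but the attribution and the suggested route are off: if you want to prove (b) rather than cite it, you would need the additional ideas of Esperet--Raymond, not just a sharper analysis of the earlier argument.
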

\begin{proof}
To prove (a), we give an argument analogous to the one used in Dvo\v{r}\'ak and Norin~\cite[Corollary~2]{dvorak2016strongly}.
Without loss of generality, we can assume that $\GG$ is closed under subgraphs, and thus it suffices to show that graphs in $\GG$ have small balanced separators.
Consider any $n$-vertex graph $G\in\GG$ and apply Theorem~\ref{thm-prs} with $m_0=n+1$
and $\ell=\Theta\bigl(n^\varepsilon\bigr)$, so that $|C|\le n/\ell=O\bigl(n^{1-\varepsilon}\bigr)$. Since $\GG$ has $\omega$-expansion bounded by $O\bigl(r^{1/(2\varepsilon)-2}\bigr)$,
we have $\omega_{d(\ell,n)}(G)=O\bigl(d^{1/(2\varepsilon)-2}(\ell,n)\bigr)$ and
\begin{align*}
|M|&\le d(\ell,n)\omega^2_{d(\ell,n)}(G)=O\bigl(d^{1/\varepsilon-3}(\ell,n)\bigr)\\
&=O\bigl(n^{1-3\varepsilon}\pll(n)\bigr)=O\bigl(n^{1-\varepsilon}\bigr),
\end{align*}
which gives the desired bound on the size of the balanced separator.

The claim (b) was proved by Esperet and Raymond~\cite{espsublin}, strengthening the bound given by Dvo\v{r}\'ak and Norin~\cite{dvorak2016strongly}.
The cycle of equivalences follows by noting that for any function $f$, expansion bounded by $f$ implies $\omega$-expansion bounded by $f+1$.
\end{proof}
The fact that polynomial expansion and polynomial $\omega$-expansion coincide is noteworthy, since for classes whose $\omega$-expansion
grows faster, this is not the case. In particular, there exist nowhere-dense classes that do not have bounded expansion,
and Lemma~\ref{lemma-equiv} shows that their $\omega$-expansion must grow superpolynomially.

Another consequence of Lemma~\ref{lemma-equiv} is that in a class with strongly sublinear separators, the algorithm of Theorem~\ref{thm-prs}
never fails to return a balanced strongly sublinear separator, in the following sense.
\begin{corollary}\label{cor-always}
For all $c>0$ and $0<\varepsilon\le 1$, there exists a polynomial-time algorithm that given an $n$-vertex graph $G\in \GG_{c,\varepsilon}$
returns a balanced separator of order $O(n^{1-\varepsilon/5})$.
\end{corollary}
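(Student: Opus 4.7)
The plan is to apply the algorithm of Theorem~\ref{thm-prs} with $m_0\colonequals n+1$ and $\ell\colonequals \lceil n^{\varepsilon/5}\rceil$. Since the support of any $m_0$-bounded model of $K_{m_0}$ contains at least $m_0=n+1>|V(G)|$ vertices, the first alternative of the theorem is impossible, so the algorithm must return a balanced separator $C\cup M$ in polynomial time. It then remains to bound $|C|+|M|$.

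The easy half is $|C|\le n/\ell=O(n^{1-\varepsilon/5})$. For $|M|\le d\cdot\omega_d^2(G)$ with $d\colonequals d(\ell,n)=O(n^{\varepsilon/5}\log n)$, I would invoke Lemma~\ref{lemma-equiv}(b): since $G\in\GG_{c,\varepsilon}$, any fixed $\delta>0$ yields $\omega_d(G)\le \nabla_d(G)+1=O\bigl(d^{1/\varepsilon+\delta}\bigr)$, whence
\[
|M| = O\bigl(d^{1+2/\varepsilon+2\delta}\bigr) = O\bigl(n^{2/5+\varepsilon/5+2\varepsilon\delta/5}\,\pll(n)\bigr).
\]
Because $\varepsilon\le 1$, the target exponent $1-\varepsilon/5$ exceeds $2/5+\varepsilon/5$ by at least $1/5$, so choosing $\delta$ sufficiently small absorbs the polylogarithmic factor and gives $|M|=O(n^{1-\varepsilon/5})$, whence $|C|+|M|=O(n^{1-\varepsilon/5})$.

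I do not expect any genuine obstacle: the argument is essentially a direct marriage of Theorem~\ref{thm-prs} (with $m_0$ chosen large enough to force the separator branch) and the polynomial expansion bound supplied by Lemma~\ref{lemma-equiv}(b). The only mildly delicate step is the one-variable optimization in $\ell$ that equalizes the two contributions; the choice $\ell=\lceil n^{\varepsilon/5}\rceil$ is slightly suboptimal (a tighter balance gives exponent $1-\varepsilon/(2\varepsilon+2)$), but it has the advantage of leaving a fixed $1/5$ slack in the exponent that comfortably absorbs both the polylogarithmic factor from $d=O(\ell\log n)$ and the $\delta$-loss from invoking Lemma~\ref{lemma-equiv}(b), uniformly over all $\varepsilon\in(0,1]$.
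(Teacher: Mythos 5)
Your proof is correct and takes essentially the same route as the paper: apply Theorem~\ref{thm-prs} with $m_0=n+1$ and $\ell=\Theta(n^{\varepsilon/5})$, bound $|C|\le n/\ell$ directly, and bound $|M|\le d\cdot\omega_d^2(G)$ via Lemma~\ref{lemma-equiv}(b). The only cosmetic difference is that the paper fixes $\delta=0.25/\varepsilon$ in Lemma~\ref{lemma-equiv}(b) up front (giving $f(r)=O(r^{1.25/\varepsilon})$ and exponent $1/2+\varepsilon/5$ on $n$), whereas you carry $\delta$ symbolically and observe at the end that the $1/5$ slack in the exponent absorbs any sufficiently small $\delta$ together with the polylog.
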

\begin{proof}
By Lemma~\ref{lemma-equiv}(b), the $\omega$-expansion of $\GG_{c,\varepsilon}$ is bounded by a function $f$ such that $f(r)=O\bigl(r^{1.25/\varepsilon}\bigr)$.
Let $\ell=\Theta\bigl(n^{\varepsilon/5}\bigr)$.  Then the algorithm of Theorem~\ref{thm-prs} (run with $m_0=n+1$) returns a balanced separator of $G$ of order at most
\begin{align*}
O\bigl(n/\ell+f^2(d(\ell,n))d(\ell,n)\bigr)&=O\bigl(n/\ell+(\ell\log n)^{2.5/\varepsilon+1}\bigr)\\
&=O\bigl(n^{1-\varepsilon/5}+n^{1/2+\varepsilon/5}\pll(n)\bigr)\\
&=O\bigl(n^{1-\varepsilon/5}\bigr)
\end{align*}
as required.
\end{proof}
Let us remark that the approximation algorithm of Feige et al.~\cite{feige} could be used instead to give the same result (actually even with
stronger guarantees on the size of the separator).  However, the idea of Corollary~\ref{cor-always} also gives the following surprising fact:
if all small subgraphs of a graph $G$ have strongly sublinear separators, then $G$ itself does as well.
\begin{corollary}\label{cor-smtobig}
For all $c>0$ and $0<\varepsilon,\delta\le 1$, there exists $c'>0$ as follows.
Suppose that $G$ is an $n$-vertex graph.  If all subgraphs of $G$ with at most $n^\delta$ vertices belong to $\GG_{c,\varepsilon}$,
then $G$ belongs to $\GG_{c',\varepsilon\delta/5}$.
\end{corollary}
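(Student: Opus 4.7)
The plan is to establish the separator bound for each subgraph $H\subseteq G$ with $n'\colonequals |V(H)|$ vertices via a case split on $n'$. For the easy case $n'\le n^\delta$, the hypothesis gives $H\in\GG_{c,\varepsilon}$ directly, and since $\varepsilon\ge\varepsilon\delta/5$ this produces a balanced separator of order at most $c(n')^{1-\varepsilon}\le c(n')^{1-\varepsilon\delta/5}$.

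For $n'>n^\delta$, I would imitate Corollary~\ref{cor-always} applied to $H$: run the algorithm of Theorem~\ref{thm-prs} on $H$ with $\ell\colonequals\Theta\bigl((n')^{\varepsilon\delta/5}\bigr)$, so that $|C|=O\bigl((n')^{1-\varepsilon\delta/5}\bigr)$ and $d\colonequals d(\ell,n')=O\bigl((n')^{\varepsilon\delta/5}\log n'\bigr)$. The new ingredient is the choice of $m_0$: letting $f(r)=O\bigl(r^{1.25/\varepsilon}\bigr)$ denote the $\omega$-expansion bound for $\GG_{c,\varepsilon}$ obtained from Lemma~\ref{lemma-equiv}(b), I would set $m_0\colonequals\lceil f(d)\rceil+1$.

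The crux is ruling out the model alternative of Theorem~\ref{thm-prs}. If the algorithm returned an $m_0$-bounded model of $K_{m_0}$ in $H$ with support $S$, then $|S|\le m_0^2 d=O\bigl(d^{2.5/\varepsilon+1}\bigr)=O\bigl((n')^{\delta/2+\varepsilon\delta/5}\pll(n')\bigr)$; since $\varepsilon\le 1$ makes $\delta/2+\varepsilon\delta/5<\delta$ and $n'\le n$, this is below $n^\delta$ once $n$ exceeds a threshold depending on $c,\varepsilon,\delta$. The hypothesis then yields $G[S]\in\GG_{c,\varepsilon}$, but $G[S]$ still hosts the $K_{m_0}$-model of depth $d$, so $\omega_d(G[S])\ge m_0>f(d)$, contradicting Lemma~\ref{lemma-equiv}(b). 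Hence the algorithm returns a separator $C\cup M$, with $|M|\le m_0^2 d$ also $O\bigl((n')^{1-\varepsilon\delta/5}\bigr)$ since $\delta/2+2\varepsilon\delta/5\le 9/10<1$ for $\varepsilon,\delta\le 1$.

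The main obstacle is verifying that a single choice of $\ell$ simultaneously enforces the three constraints $n'/\ell=O\bigl((n')^{1-\varepsilon\delta/5}\bigr)$, $m_0^2 d\le n^\delta$, and $m_0^2 d=O\bigl((n')^{1-\varepsilon\delta/5}\bigr)$. As sketched, these reduce to elementary inequalities in $\varepsilon,\delta\in(0,1]$ with enough slack to absorb polylogarithmic factors, and graphs with $n$ below the threshold are handled by inflating the final constant $c'$.
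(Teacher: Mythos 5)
Your proof is correct and follows essentially the same approach as the paper: apply Theorem~\ref{thm-prs} with $m_0=f(d)+1$ where $f$ is the $\omega$-expansion bound for $\GG_{c,\varepsilon}$ from Lemma~\ref{lemma-equiv}(b), then rule out the $K_{m_0}$-model outcome because its support $S$ has size at most $n^\delta$, so $G[S]\in\GG_{c,\varepsilon}$ contradicts $\omega_d(G[S])\ge m_0>f(d)$. You are somewhat more careful than the paper in explicitly carrying out the case split over every subgraph $H$ with $n'=|V(H)|$ (the paper runs the argument only for $G$ itself and leaves the extension to subgraphs implicit), but the decomposition, the choice of parameters, and the exponent arithmetic are the same.
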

\begin{proof}
By Lemma~\ref{lemma-equiv}(b), the $\omega$-expansion of $\GG_{c,\varepsilon}$ is bounded by a non-decreasing function $f$ such that $f(r)=O\bigl(r^{1.25/\varepsilon}\bigr)$.
Let $d$ be the function from the statement of Theorem~\ref{thm-prs}.  Choose $\ell=\Theta\bigl(n^{\varepsilon\delta/5}\bigr)$ such that $(f(d(\ell,n))+1)^2d(\ell,n)\le n^\delta$.
Apply the algorithm of Theorem~\ref{thm-prs} with $m_0=f(d(\ell,n))+1$.
Note that if $S$ is a support of an $m_0$-bounded model of depth $d(\ell,n)$ of $K_m$ for some $m\le m_0$,
then $|S|\le n^\delta$ by the choice of $\ell$.  By the assumptions, we have $G[S]\in \GG_{c,\varepsilon}$.
Consequently $m\le \omega_{d(\ell,n)}(G[S])\le f(d(\ell,n))<m_0$,
and thus the first outcome of Theorem~\ref{thm-prs} does not apply.  Hence, we obtain a balanced separator of $G$ of order at most
\begin{align*}
O\bigl(n/\ell+f^2(d(\ell,n))d(\ell,n)\bigr)&=O\bigl(n/\ell+(\ell\log n)^{2.5/\varepsilon+1}\bigr)\\
&\le O\bigl(n^{1-\varepsilon\delta/5}+n^{\delta(1/2+\varepsilon/5)}\pll(n)\bigr)\\
&=O\bigl(n^{1-\varepsilon\delta/5}\bigr)
\end{align*}
as required.
\end{proof}

\section{Weighted separators and fractional treewidth-fragility}\label{sec-frag}

The cornerstone of this paper is the following weighted strengthening of Theorem~\ref{thm-prs}.
For a function $q:V(G)\to \mathbf{Q}_0^+$ (where $\mathbf{Q}_0^+$ are the nonnegative rational numbers) and a set $X\subseteq V(G)$, let us define $q(X)=\sum_{x\in X} q(x)$.
\begin{theorem}\label{thm-prsgen}
There exists a non-decreasing function $d:\mathbf{N}^2\to\mathbf{N}$ satisfying $d(\ell,n)=O(\ell\log n)$ and a polynomial-time algorithm that,
given an $n$-vertex graph $G$, integers $\ell,m_0\ge 1$, and an assignment $q:V(G)\to\mathbf{Q}_0^+$ of nonnegative costs to vertices of $G$,
returns either an $m_0$-bounded model of $K_{m_0}$ of depth $d\colonequals d(\ell,n)$ in $G$, or
disjoint sets $C,M\subseteq V(G)$ such that
\begin{itemize}
\item $C\cup M$ is a balanced separator in $G$,
\item $q(C)\le q(V(G))/\ell$, and
\item for some $m\le \min(m_0,\omega_d(G))$, $M$ is the support of a $\min(m_0,\omega_d(G))$-bounded model of $K_m$ of depth $d$ in $G$.
\end{itemize}
\end{theorem}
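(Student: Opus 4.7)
The plan is to generalize the Plotkin--Rao--Smith region-growing procedure by replacing the vertex-count measure with the cost function $q$ exclusively where the bound on the non-model portion $C$ of the separator is tracked; the depth bound, the vertex-count balance of $C\cup M$, and the $m_0$-boundedness of the model pieces will remain controlled by unweighted BFS.

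The core of the modified algorithm, as I would formulate it, proceeds as follows: in each round, starting from a vertex $v$ of the current residual graph, expand BFS balls $B_0\subseteq B_1\subseteq\cdots$ with layers $L_{i+1}=B_{i+1}\setminus B_i$, and monitor both $|B_i|$ (to decide on model extraction) and the $q$-weighted trigger $q(L_{i+1})\le q(B_i)/\ell$ (to decide on cutting). If the trigger fires, add $L_{i+1}$ to $C$ and peel off $B_i$; if $|B_i|$ reaches $m_0 d$ first without the trigger firing, extract $B_i$ as a piece of the $K_{m_0}$-model. Since the balls peeled across different rounds are pairwise disjoint, the telescoping $q(C)=\sum_{\text{rounds}} q(L_{i+1})\le \sum_{\text{rounds}} q(B_i)/\ell\le q(V(G))/\ell$ yields the required bound. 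Everything else---iteration structure for assembling the balanced separator, combining $m_0$ pieces into the clique model, and bounding the polynomial running time---would carry over unchanged from PRS, since these steps depend only on vertex counts and graph structure and are oblivious to $q$.

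The main technical obstacle is ensuring that the $q$-weighted trigger fires within the same depth $d(\ell,n)=O(\ell\log n)$ that the unweighted PRS uses. A degenerate $q$---zero on many vertices, or concentrated on a single heavy vertex---could in principle delay the trigger indefinitely or permit the initial ball $B_0=\{v\}$ to have $q(B_0)=0$. I would handle this by working with a shifted cost $\tilde q=q+\alpha\mathbf{1}$ for a small offset such as $\alpha=q(V(G))/n$: then $\tilde q(B_i)\ge\alpha>0$ for any nonempty ball, failure of the trigger at layer $i$ implies $\tilde q(B_{i+1})\ge(1+1/\ell)\tilde q(B_i)$, and since $\tilde q(V(G))\le 2q(V(G))$ the trigger must fire within $O(\ell\log n)$ layers, matching $d(\ell,n)$. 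The shift inflates the final $q$-bound on $C$ by at most $\alpha|C|\le q(V(G))/\ell$, a constant-factor overhead that I would absorb by calibrating $\ell$ (or equivalently by replacing the trigger constant $1/\ell$ with a slightly smaller one). With this reconciliation in place, Theorem~\ref{thm-prsgen} follows.
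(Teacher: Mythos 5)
Your core idea---introduce the cost function $q$ only into the BFS stopping rule, handle degenerate costs by a small uniform offset so that failure of the trigger forces geometric growth, and then telescope over the pairwise-disjoint peeled balls to get $q(C)\le q(V(G))/\ell$---is sound and closely parallels the paper's strategy. The paper handles degeneracy by moving all vertices with $q(v)\le q(V(G))/(2\ell n)$ into an auxiliary set $C_0$ rather than by shifting $q$, but the two devices are equivalent up to a constant factor that can indeed be absorbed by recalibrating $\ell$.

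The genuine gap is that your trigger is one-sided while the balance argument needs two sides. You fire on $\tilde q(L_{i+1})\le \tilde q(B_i)/\ell$ and then peel $B_i$, but nothing bounds $|B_i|$ in terms of the residual vertex count: the depth bound $d(\ell,n)$ controls the radius, not the size, and a ball of radius $O(\ell\log n)$ in the residual graph can contain essentially all of the remaining vertices. If a single peeled ball has more than $\tfrac{2}{3}|V(G)|$ vertices, the accumulated region jumps past the balance threshold in one step and $C\cup M$ fails to be a balanced separator. The paper avoids exactly this by making the stopping rule two-sided (Lemma~\ref{lemma-seporexp} returns a layer $V_i$ whose cost is small relative to \emph{both} $S_i$ and $L_i$), which lets the algorithm freely add the \emph{smaller-by-vertex-count} side $D$ to the accumulated region $A$; since $|D|\le|R|/2$, the invariant $|A|<\tfrac{2}{3}|V(G)|$ is preserved. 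The assertion that ``iteration structure for assembling the balanced separator \ldots would carry over unchanged from PRS, since these steps depend only on vertex counts'' is precisely where this breaks: the balance bookkeeping interacts with which side of the cut you retain, and the cost condition must hold for the side you add, so the one-sided rule does not decouple. A secondary, smaller issue: the criterion ``extract $B_i$ when $|B_i|$ reaches $m_0 d$'' does not by itself produce a model piece of $K_{m_0}$; a new piece must connect by an edge to every earlier piece, which the paper ensures by taking the union of shortest paths from a low-radius center $v_0$ to the neighborhood sets $N_1,\ldots,N_m$ of the existing pieces rather than an arbitrary ball.
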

Note that the last condition implies $|M|\le (\min(m_0,\omega_d(G)))^2d$.
The proof of this theorem is given in Section~\ref{sec-proofs}.

Before seeing its applications, let us give a few remarks
on Theorem~\ref{thm-prsgen}.
\begin{itemize}
\item There is another way how to introduce weights into Theorem~\ref{thm-prs}: the weights could influence what ``balanced''
means. This was already done by Plotkin, Rao, and Smith~\cite{plotkin}, and Theorem~\ref{thm-prsgen} could be modified in this way
(adding a weight function $w:V(G)\to \mathbf{Q}_0^+$, and requiring that each component $K$ of $G-(C\cup M)$ satisfies $w(V(K))\le \tfrac{2}{3}w(V(G))$
instead of $C\cup M$ being a balanced separator) without any significant changes to its proof.  Since we do not need this device,
we opted not to complicate the statement of Theorem~\ref{thm-prsgen}.

\item The fact that the cost $q(M)$ is not restricted cannot be entirely avoided.  Consider e.g. the case that $G$ is the star
$K_{1,n-1}$ with cost $1/4$ given to the center of the star and cost $3/4$ spread uniformly across
the rays.  For any balanced separator $Z$, either $Z$ contains the center of the star, or at least $n/3$ of the rays,
and thus $q(Z)\ge 1/4$; hence, for $\ell$ larger than $4$, not counting some part of the separator towards its cost is
necessary.

\item For graphs from a class with polynomial $\omega$-expansion, the bound on the size
of $M$ in Theorem~\ref{thm-prsgen} depends polylogarithmically on the number
$n$ of vertices of $G$.  I conjecture that there actually always exists a balanced
separator $C\cup M$ of $G$ with $q(C)\le q(V(G))/\ell$ for some set $M$ of
constant size (dependent on the class and $\ell$, but not on $|V(G)|$).
\end{itemize}

Iterating Theorem~\ref{thm-prsgen}, we can break-up the graph into small pieces combined in a tree-like fashion;
i.e., forming a subgraph of small treewidth.  A \emph{tree decomposition} of a graph $G$ is a pair $(T,\beta)$,
where $T$ is a tree and $\beta$ assigns to each vertex of $T$ a subset of vertices of $G$, such that for every $uv\in E(G)$
there exists $z\in V(T)$ satisfying $\{u,v\}\subseteq \beta(z)$, and for every $v\in V(G)$ the set $\{z\in V(T):v\in\beta(z)\}$
induces a non-empty connected subtree of $T$.  The \emph{width} of the tree decomposition is $\max\{|\beta(z)|:z\in V(T)\}-1$,
and the \emph{treewidth} $\tw(G)$ of $G$ is the minimum width over all tree decompositions of $G$.
The importance of bounded treewidth in the context of classes with strongly sublinear separators
stems from the fact that graphs of treewidth at most $t$ have balanced separators of order at most $t+1$.

\begin{corollary}\label{cor-tw}
There exist non-decreasing functions $b:\mathbf{N}^2\to\mathbf{N}$ and $k:\mathbf{N}^3\to\mathbf{N}$ satisfying $b(\ell,n)=O(\ell\log^2 n)$ and
$k(\ell,\omega,n)=O(\omega^2\ell\log^3 n)$, and
a polynomial-time algorithm that, given an $n$-vertex graph $G$, integers $\ell,m_0\ge 1$,
and an assignment $q:V(G)\to\mathbf{Q}_0^+$ of costs to vertices of $G$, not identically $0$, returns
either an $m_0$-bounded model of $K_{m_0}$ of depth $b(\ell,n)$ in $G$, or
a set $X\subseteq V(G)$ with $q(X)<q(V(G))/\ell$ and a tree decomposition of $G-X$ of width at most $k(\ell, \min(m_0,\omega_{b(\ell,n)}(G)), n)$.
\end{corollary}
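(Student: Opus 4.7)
The plan is to iterate Theorem~\ref{thm-prsgen} in a standard divide-and-conquer fashion, building the tree decomposition by peeling off a separator at each step and recursing on the resulting components.

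Set $\ell'\colonequals\lceil 2\ell\log_{3/2}n\rceil$ and recurse as follows on each subgraph $G'$ of $G$ encountered (starting with $G'=G$). Run the algorithm of Theorem~\ref{thm-prsgen} on $G'$ with parameters $\ell'$, $m_0$, and the restriction of $q$ to $V(G')$. If it returns an $m_0$-bounded model of $K_{m_0}$ of depth $d(\ell',|V(G')|)\le d(\ell',n)$, this is also such a model in $G$, and we output it and halt. Otherwise, we obtain $C,M\subseteq V(G')$ such that $C\cup M$ is a balanced separator of $G'$, $q(C)\le q(V(G'))/\ell'$, and $|M|\le(\min(m_0,\omega_{d(\ell',n)}(G')))^2\,d(\ell',n)$. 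Add $C$ to the running set $X$ and recurse on each component of $G'-(C\cup M)$; when $|V(G')|\le 1$, return a single-bag decomposition instead. Combine the decompositions returned by the subcalls into one by creating a new root bag equal to $M$, joining the root of each subcall's decomposition to it, and adding $M$ to every bag of each subcall's decomposition. This is a valid tree decomposition of the current graph minus the accumulated $X$, since $C\cup M$ separates the components from each other and $M$ lies in every bag covering vertices of its former neighbours.

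The recursion depth is at most $\log_{3/2}n$, since the separator at each level is balanced. At level $i$ the active subgraphs are pairwise vertex-disjoint, so the total $q$-mass added to $X$ at that level is $\sum_{G'}q(C)\le\sum_{G'}q(V(G'))/\ell'\le q(V(G))/\ell'$. Summing over all levels gives $q(X)\le q(V(G))\log_{3/2}(n)/\ell'<q(V(G))/\ell$ by the choice of $\ell'$. The depth parameter used throughout is $d(\ell',n)=O(\ell'\log n)=O(\ell\log^2 n)$, which we declare to be $b(\ell,n)$. Since $G'\subseteq G$ implies $\omega_{b(\ell,n)}(G')\le\omega_{b(\ell,n)}(G)$, each $|M|$ is at most $\omega^2\,b(\ell,n)$, where $\omega\colonequals\min(m_0,\omega_{b(\ell,n)}(G))$. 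Each bag of the final decomposition is the union of one constant-size leaf bag and the $O(\log n)$ $M$-sets added by its ancestors in the recursion tree, so its size is $O(\log n\cdot\omega^2\,b(\ell,n))=O(\omega^2\ell\log^3 n)$, matching $k(\ell,\omega,n)$.

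The main obstacle is the bookkeeping across recursion levels: one must verify that a single $\ell'$, chosen from the original $n$, gives a legal application of Theorem~\ref{thm-prsgen} at every level (which uses that $d$ is non-decreasing and $|V(G')|\le n$), and that the monotonicity $\omega_d(G')\le\omega_d(G)$ under taking subgraphs lets us replace the subcall-specific bound on $|M|$ by the global bound in terms of $\omega_{b(\ell,n)}(G)$. Once these monotonicity points and the recursion-depth bound from balancedness are in hand, both the $q(X)$ bound and the width bound follow by straightforward summation of per-level contributions over the $O(\log n)$ levels.
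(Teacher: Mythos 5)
Your proposal follows essentially the same approach as the paper: iterate Theorem~\ref{thm-prsgen} recursively on the components after removing each separator $C\cup M$, collecting the low-cost parts $C$ into $X$, using balancedness to bound the recursion depth by $O(\log n)$, and letting each bag be the union of the $M$-sets along its root-to-leaf path. The only cosmetic differences are that the paper takes $\ell_0=\ell\lceil\log_{3/2}n\rceil+1$ instead of your $\ell'=\lceil 2\ell\log_{3/2}n\rceil$ (a constant-factor difference), and it presents the tree decomposition top-down via explicit maps $\theta,\gamma,\mu,\beta$ rather than by recursive assembly.
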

\begin{proof}
Let $\ell_0=\ell\lceil \log_{3/2} n\rceil+1$.  Let $d$ be the function from the statement of Theorem~\ref{thm-prsgen} and let us define
$b(\ell,n)=d(\ell_0,n)$.  Let $T$ be a rooted tree and $\theta$, $\gamma$, and $\mu$ functions mapping $V(T)$ to subsets of $V(G)$
obtained as follows.  For the root $r$ of $T$, let $\theta(r)=V(G)$.  Considering a vertex $v\in V(T)$, let us apply the algorithm
of Theorem~\ref{thm-prsgen} to $G[\theta(v)]$, with $\ell=\ell_0$ and $q$ restricted to $\theta(v)$.  If the outcome is
an $m_0$-bounded model of $K_{m_0}$ of depth $d(\ell_0,n)=b(\ell,n)$ in $G[\theta(v)]$,
we return this model.  Otherwise, let $C$ and $M$ be as in the statement of Theorem~\ref{thm-prsgen}.  We set $\gamma(v)=C$, $\mu(v)=M$,
and for each component $K$ of $G[\theta(v)]-(C\cup M)$, we create a child $u$ of $v$ with $\theta(u)=V(K)$.  We repeat this construction
to obtain $T$, $\theta$, $\gamma$, and $\mu$.

Let $X=\bigcup_{v\in V(T)} \gamma(v)$.  For $v\in V(T)$, let $\beta(v)$ be the union of the sets $\mu(u)$ over all ancestors $u$ of $v$ in $T$
(including $v$ itself).  Then $(T,\beta)$ is a tree decomposition of $G-X$.  Note that $T$ has depth at most $\lceil\log_{3/2} n\rceil$.
Since $|\mu(u)|\le (\min(m_0,\omega_{b(\ell,n)}(G)))^2b(\ell,n)$ for all $u\in V(T)$, we conclude that the width of the decomposition is
$O((\min(m_0,\omega_{b(\ell,n)}(G)))^2b(\ell,n)\log n)$, and we define the function $k$ according to this bound.

For $0\le i\le \lceil\log_{3/2} n\rceil-1$, let $V_i$ denote the set of vertices
of $T$ at distance $i$ from the root.  Note that for distinct $u,v\in V_i$, the sets $\theta(u)$ and $\theta(v)$ are disjoint.
We conclude that $q(\bigcup_{v\in V_i} \gamma(v))\le q(V(G))/\ell_0$, and since $V(T)=\bigcup_{i=0}^{\lceil\log_{3/2} n\rceil-1} V_i$,
we have $q(X)\le \lceil \log_{3/2} n\rceil q(V(G))/\ell_0<q(V(G))/\ell$, as required.
\end{proof}

We will actually need a dual form of Corollary~\ref{cor-tw}, related to the concept of \emph{fractional treewidth-fragility}
introduced in~\cite{twd}.
Let $G$ be a graph and let $\mathcal{X}$ be a collection of subsets of $V(G)$.
For $\varepsilon>0$, we say that a probability distribution on $\mathcal{X}$ is \emph{$\varepsilon$-thin} if
a set $X\in \mathcal{X}$ chosen at random from this distribution satisfies $\text{Pr}[v\in X]\le \varepsilon$ for all $v\in V(G)$.
The \emph{support} of this distribution consists of the elements of $\mathcal{X}$ with non-zero probability.
For $k\ge 0$, let $\coTW_k(G)$ denote the collection of all sets $X\subseteq V(G)$ such that
$\tw(G-X)\le k$.  We say that a class $\GG$ is \emph{fractionally $(\varepsilon,k)$-treewidth-fragile} if for each graph $G\in \GG$
there exists an $\varepsilon$-thin probability distribution on $\coTW_k(G)$.  A class $\GG$ is \emph{fractionally treewidth-fragile}
if for all $\varepsilon>0$ there exists $k\ge 0$ such that $\GG$ is fractionally $(\varepsilon,k)$-treewidth-fragile.

As an example, consider a connected planar graph $G$.  Let $v$ be any vertex of $G$, and for $i\ge 0$, let $V_i$ be the set
of vertices of $G$ at distance exactly $i$ from $v$.  For $0\le j<t$, let $V_{j,t}=\bigcup_{i\bmod t=j} V_i$.
A result of Robertson and Seymour~\cite{rs3} implies that the graph $G-V_{j,t}$ has treewidth at most $3t+1$,
and thus $V_{j,t}\in\coTW_{3t+1}(G)$.  Let us assign each of the sets $V_{j,t}$ for $0\le j\le t-1$ probability $1/t$,
and all other sets in $\coTW_{3t+1}(G)$ probability $0$.  Since the sets $V_{0,t}$, \ldots, $V_{t-1,t}$ are pairwise
disjoint, if a set $X$ is chosen at random from this distribution, we have $\text{Pr}[v\in X]\le 1/t$ for all $v\in V(G)$.
Hence, the class of planar graphs is fractionally $(1/t,3t+1)$-treewidth-fragile.  Consequently, for every $\varepsilon>0$,
the class of planar graphs is fractionally $(\varepsilon, 3\lceil 1/\varepsilon\rceil+1)$-treewidth-fragile, and thus the class of planar
graphs is fractionally treewidth-fragile.

Let us remark that the word ``fractional'' in the definition of fractional treewidth-fragility
refers to the fact that the sets in the support of the distribution do not need to be pairwise disjoint; hence, planar graphs
are actually examples of classes that are ``treewidth-fragile'', in a non-fractional sense.  Three-dimensional grids are among examples
of natural graph classes that are fractionally treewidth-fragile, but not treewidth-fragile in the non-fractional sense; see~\cite{twd,gridtw}
for more details.

Fractional treewidth-fragility has applications in algorithmic design, especially regarding approximation algorithms (essentially,
a set $X$ sampled from an $\varepsilon$-thin probability distribution on $\coTW_k(G)$ will likely intersect an optimal solution
in only a small fraction of vertices, and one is often able to efficiently recover the large part of the solution contained in $G-X$ using the
fact that this graph has small treewidth); see~\cite{twd} for more details.  Also, fractional treewidth-fragility implies sublinear separators (see~\cite[Lemma 14]{twd}),
and in~\cite{twd}, I conjectured a converse: strongly sublinear separators imply fractional treewidth-fragility.  If in Corollary~\ref{cor-tw},
the treewidth of $G-X$ did not depend on the number of vertices of $G$, this would imply this conjecture.  As it stands,
Corollary~\ref{cor-tw} only implies a weakening of the claim we now describe.

\begin{theorem}\label{thm-thin}
There exist non-decreasing functions $b:\mathbf{N}^2\to\mathbf{N}$ and $k_0:\mathbf{N}^3\to\mathbf{N}$ satisfying $b(\ell,n)=O(\ell\log^2 n)$ and
$k_0(\ell,\omega,n)=O(\omega^2\ell\log^3 n)$, and
a polynomial-time algorithm that, given an $n$-vertex graph $G$ and integers $\ell,m_0\ge 1$,
returns either an $m_0$-bounded model of $K_{m_0}$ of depth $b(\ell,n)$ in $G$, or
a $(1/\ell)$-thin probability distribution on $\coTW_t(G)$ with support of size at most $n$,
where $t=k_0(\ell, \min(m_0,\omega_{b(\ell,n)}(G)), n)$.
\end{theorem}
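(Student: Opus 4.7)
The statement is the LP-dual counterpart of Corollary~\ref{cor-tw}: Corollary~\ref{cor-tw} produces, for any cost function, a single low-cost deletion set whose removal yields small treewidth, whereas Theorem~\ref{thm-thin} asks for a single distribution on such deletion sets whose per-vertex marginals are uniformly small. I would prove it by using Corollary~\ref{cor-tw} as a separation oracle inside the multiplicative weights update (MWU) method. Formally, consider the feasibility LP with variables $\pi(X)$ for $X\in\coTW_t(G)$ subject to $\pi\ge 0$, $\sum_X\pi(X)=1$ and $\sum_{X\ni v}\pi(X)\le 1/\ell$ for all $v$; a feasible $\pi$ is exactly a $(1/\ell)$-thin distribution on $\coTW_t(G)$. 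By LP duality, infeasibility would produce a cost function $q:V(G)\to\mathbf{Q}_0^+$ with $q(V(G))=1$ and $q(X)>1/\ell$ for every $X\in\coTW_t(G)$, which is exactly what Corollary~\ref{cor-tw} rules out---unless instead it returns the desired $K_{m_0}$-model, in which case we simply relay that model and terminate.

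\textbf{Algorithm.} Initialize $w_0(v)=1$ for each $v\in V(G)$, fix a step size $\eta=1/2$, and iterate for $T=\Theta(\ell\log n)$ rounds. In round $i$, call the algorithm of Corollary~\ref{cor-tw} on $G$ with cost $w_i$ and parameter $2\ell$. If it returns an $m_0$-bounded model of $K_{m_0}$ (of depth $O(\ell\log^2 n)$, matching the advertised $b(\ell,n)$), output the model and halt. Otherwise, receive a set $X_i$ with $w_i(X_i)<w_i(V(G))/(2\ell)$ and a tree decomposition of $G-X_i$ of width at most $k(2\ell,\min(m_0,\omega_{b(2\ell,n)}(G)),n)=O(\omega^2\ell\log^3 n)$, which fixes the value of $t=k_0(\ell,\omega,n)$ in the conclusion. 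Multiplicatively update $w_{i+1}(v)=(1+\eta)w_i(v)$ for $v\in X_i$ and $w_{i+1}(v)=w_i(v)$ otherwise, and output the uniform distribution on $X_1,\ldots,X_T$.

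\textbf{Analysis.} Let $N_T(v)$ be the number of rounds in which $v\in X_i$. The standard MWU potential argument compares $\sum_v w_T(v)\le n(1+\eta/(2\ell))^T$ with the pointwise bound $w_T(v)=(1+\eta)^{N_T(v)}$, yielding $N_T(v)\le\bigl(\ln n+T\eta/(2\ell)\bigr)/\ln(1+\eta)$. For $T$ a sufficiently large constant times $\ell\log n$ this forces $N_T(v)/T\le 1/\ell$ for every $v$, so the output is indeed $(1/\ell)$-thin on $\coTW_t(G)$. The support size is $T=O(\ell\log n)$, which is at most $n$ in the regime where the theorem is nontrivial; otherwise a standard basic-feasible-solution / Carath\'eodory argument applied to the marginal vector $\bigl(\Pr[v\in X]\bigr)_{v\in V(G)}\in\mathbf{R}^n$ trims the support to meet the bound. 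Polynomial runtime follows from the polynomial iteration count combined with the polynomial-time guarantee of Corollary~\ref{cor-tw}.

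\textbf{Main obstacle.} The delicate ingredient is the interplay of constants: to drive the marginals down to exactly $1/\ell$ rather than $(1+o(1))/\ell$, one must invoke Corollary~\ref{cor-tw} with a strictly tighter cost budget (the factor $2$ above) and then tune $\eta$ and $T$ so that the unavoidable additive $\log n/T$ slack in the MWU analysis is absorbed into the gap between $1/(2\ell)$ and $1/\ell$. This is routine but dictates the factor $\Theta(\ell\log n)$ in $T$ and, via the cost-function-robust bounds of Corollary~\ref{cor-tw} evaluated at $2\ell$, produces the precise forms $b(\ell,n)=O(\ell\log^2 n)$ and $k_0(\ell,\omega,n)=O(\omega^2\ell\log^3 n)$ asserted in the theorem.
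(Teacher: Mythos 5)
Your proposal is correct in substance but takes a genuinely different route from the paper. The paper sets up the same primal LP (a probability distribution $p$ on $\coTW_t(G)$ with marginals at most $\varepsilon$), passes to the dual in the cost variables $q(v)$, and runs the ellipsoid method on the dual feasibility polytope $\{q\ge 0,\ \sum_v q(v)=1,\ \sum_{v\in X}q(v)\ge 1/\ell\}$ with Corollary~\ref{cor-tw} as the separation oracle. Infeasibility is certified by polynomially many violated constraints $X_1,\ldots,X_t$; the paper then re-solves the small dual LP restricted to $\mathcal{X}=\{X_1,\ldots,X_t\}$, takes an optimal \emph{vertex} (which has at most $n$ active constraints of the form $\sum_{v\in X}q(v)\ge s$ since the feasible region has dimension $n$), and dualizes once more to extract a $(1/\ell)$-thin distribution supported on at most $n$ sets. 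You replace the entire ellipsoid-plus-duality machinery with multiplicative weights: run Corollary~\ref{cor-tw} at budget $2\ell$ for $T=\Theta(\ell\log n)$ rounds, multiplicatively penalize vertices that keep landing in the deletion sets, and output the empirical distribution over $X_1,\ldots,X_T$. Your potential/pointwise-weight comparison correctly yields $N_T(v)/T\le 1/\ell$, and the parameters $b(\ell,n)=O(\ell\log^2 n)$, $k_0(\ell,\omega,n)=O(\omega^2\ell\log^3 n)$ come out as stated since you invoke the corollary at $2\ell$. The MWU route is arguably more elementary and gives an explicit, oracle-efficient algorithm; the ellipsoid route in the paper is shorter to state and gets the support bound for free from the vertex structure of the reduced LP. One small imprecision in your writeup: Carath\'eodory applied to the marginal vectors in $\mathbf{R}^n$ gives support at most $n+1$, not $n$; to match the stated bound of $n$ you should instead take a basic optimal solution of the LP minimizing $\varepsilon$ over distributions supported on $\{X_1,\ldots,X_T\}$ (as in the paper's dual-vertex argument), where the extra variable $\varepsilon$ absorbs one of the $n+1$ non-nonnegativity constraints and leaves at most $n$ positive $\pi(X)$'s.
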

The proof of this theorem is given in Section~\ref{sec-proofs}.

In particular, for classes of graphs with strongly sublinear separators, Lemma~\ref{lemma-equiv} and Theorem~\ref{thm-thin} give the following.
Let $k:\mathbf{N}^2\to\mathbf{N}$ be a non-decreasing function.  We say that an $n$-vertex graph $G$ is \emph{fractionally $k$-treewidth-fragile}
if for $\ell=1,\ldots,n$, there exists a $(1/\ell)$-thin probability distribution on $\coTW_{k(\ell,n)}(G)$.
\begin{corollary}\label{cor-thin}
Let $c>0$ and $0<\varepsilon\le 1$ be real numbers.  There exists a non-decreasing function $k:\mathbf{N}^2\to\mathbf{N}$
satisfying $k(\ell, n)=O\bigl((\ell\log^2 n)^{4/\varepsilon}\bigr)$ such that all graphs in
$\GG_{c,\varepsilon}$ are fractionally $k$-treewidth-fragile.  Furthermore, there exists a polynomial-time
algorithm that for each $n$-vertex graph $G$ and an integer $1\le \ell\le n$ either returns a $(1/\ell)$-thin probability distribution on
$\coTW_{k(\ell,n)}(G)$ with support of size at most $n$, or shows that $G\not\in \GG_{c,\varepsilon}$.
\end{corollary}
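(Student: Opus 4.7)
The plan is to invoke Theorem~\ref{thm-thin} with a value of $m_0$ chosen from the $\omega$-expansion bound for $\GG_{c,\varepsilon}$ supplied by Lemma~\ref{lemma-equiv}(b). Using that lemma, together with the remark that expansion bounded by $f$ implies $\omega$-expansion bounded by $f+1$, I fix once and for all a non-decreasing function $f$ with $f(r)=O\bigl(r^{1.25/\varepsilon}\bigr)$ such that $\omega_r(H)\le f(r)$ for every $r$ and every $H\in\GG_{c,\varepsilon}$.

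Given the input $G$ and $\ell$, let $b$ and $k_0$ be the functions from Theorem~\ref{thm-thin}, set $m_0\colonequals f(b(\ell,n))+1$, and run the algorithm of Theorem~\ref{thm-thin} on $(G,\ell,m_0)$. If it returns an $m_0$-bounded model of $K_{m_0}$ of depth $b(\ell,n)$, this directly witnesses $\omega_{b(\ell,n)}(G)\ge m_0>f(b(\ell,n))$, which by the choice of $f$ forces $G\notin\GG_{c,\varepsilon}$, and the algorithm reports this. Otherwise, the algorithm outputs a $(1/\ell)$-thin probability distribution on $\coTW_t(G)$ with support of size at most $n$, where $t=k_0(\ell,\min(m_0,\omega_{b(\ell,n)}(G)),n)\le k_0(\ell,m_0,n)$. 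Accordingly I would define $k(\ell,n)\colonequals k_0\bigl(\ell,f(b(\ell,n))+1,n\bigr)$, which is non-decreasing because $b$, $f$, and $k_0$ are.

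To verify the advertised bound, plug in $b(\ell,n)=O(\ell\log^2 n)$, $f(b(\ell,n))=O\bigl((\ell\log^2 n)^{1.25/\varepsilon}\bigr)$, and $k_0(\ell,\omega,n)=O(\omega^2\ell\log^3 n)$ to obtain $k(\ell,n)=O\bigl((\ell\log^2 n)^{2.5/\varepsilon}\cdot \ell\log^3 n\bigr)$. Since $\varepsilon\le 1$ gives $1.5/\varepsilon\ge 1.5$, the factor $(\ell\log^2 n)^{1.5/\varepsilon}$ dominates $\ell\log^3 n$, and hence $k(\ell,n)=O\bigl((\ell\log^2 n)^{4/\varepsilon}\bigr)$ as required. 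For the non-algorithmic part of the statement, observe that if $G\in\GG_{c,\varepsilon}$ then by the choice of $m_0$ the first outcome of Theorem~\ref{thm-thin} never occurs, so applying the procedure for each $\ell\in\{1,\ldots,n\}$ yields a $(1/\ell)$-thin distribution on $\coTW_{k(\ell,n)}(G)$, which is precisely the definition of fractional $k$-treewidth-fragility.

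I do not anticipate a serious obstacle: the proof is essentially plumbing that turns the $\omega$-expansion bound from Lemma~\ref{lemma-equiv}(b) into a threshold $m_0$ making the minor-finding outcome of the black-box algorithm of Theorem~\ref{thm-thin} certify non-membership in $\GG_{c,\varepsilon}$, with only routine arithmetic of exponents needed to fit the resulting treewidth bound inside $O\bigl((\ell\log^2 n)^{4/\varepsilon}\bigr)$.
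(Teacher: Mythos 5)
Your proposal is correct and follows essentially the same path as the paper's proof: both use Lemma~\ref{lemma-equiv}(b) to obtain a non-decreasing $\omega$-expansion bound $f(r)=O(r^{1.25/\varepsilon})$, set $m_0=f(b(\ell,n))+1$ when invoking Theorem~\ref{thm-thin}, interpret the first outcome as a certificate that $G\notin\GG_{c,\varepsilon}$, and absorb the $k_0(\ell,f(b(\ell,n))+1,n)=O\bigl((\ell\log^2 n)^{2.5/\varepsilon}\ell\log^3 n\bigr)$ bound into $O\bigl((\ell\log^2 n)^{4/\varepsilon}\bigr)$ via the same exponent arithmetic (using $\varepsilon\le 1$). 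The only difference is cosmetic bookkeeping around the $+1$ in the argument of $k_0$, which the paper drops because it is harmless asymptotically.
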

\begin{proof}
By Lemma~\ref{lemma-equiv}(b), the $\omega$-expansion of $\GG_{c,\varepsilon}$ is bounded by a non-decreasing function $f$ such
that $f(r)=O\bigl(r^{1.25/\varepsilon}\bigr)$.  For a given $n$-vertex graph $G$
and an integer $\ell\in\{1,\ldots,n\}$, let us apply the algorithm of Theorem~\ref{thm-thin} with $m_0=f(b(\ell,n))+1$.
If the algorithm returns a model of $K_{m_0}$ of depth $b(\ell,n)$ in $G$, then
$\omega_{b(\ell,n)}(G)\ge m_0>f(b(\ell,n))$, and thus $G\not\in \GG_{c,\varepsilon}$.
Otherwise, the algorithm returns a $(1/\ell)$-thin probability distribution on $\coTW_t(G)$ with support of size at most $n$,
where
\begin{align*}
t&=k_0(\ell, \min(m_0,\omega_{b(\ell,n)}(G)), n)\le k_0(\ell,f(b(\ell,n)),n)\\
&=k_0\bigl(\ell,O\bigl((\ell\log^2 n)^{1.25/\varepsilon}\bigr),n\bigr)\\
&=O\bigl((\ell\log^2 n)^{2.5/\varepsilon}\ell\log^3 n\bigr)=O\bigl((\ell\log^2n)^{4/\varepsilon}\bigr).
\end{align*}
We can choose the function $k$ accordingly.
\end{proof}

Fractional $k$-treewidth-fragility for the function $k$ from Corollary~\ref{cor-thin} does not necessarily imply strongly sublinear separators.  However, it does imply existence of
small separators in subgraphs of at least polylogarithmic size.

\begin{lemma}\label{lemma-largesepar}
For a real number $t\ge 0$ and a non-decreasing function $k:\mathbf{N}^2\to\mathbf{N}$ satisfying $k(\ell, n)=O\bigl((\ell\log^2 n)^t\bigr)$,
there exists a real number $c>0$ as follows.
Suppose that an $n$-vertex graph $G$ is fractionally $k$-treewidth-fragile.
If $H$ is a subgraph of $G$ with $m\ge \log^{4t}n$ vertices, then $H$ has a balanced separator of order at most $cm^{1-1/(2t+2)}$.
\end{lemma}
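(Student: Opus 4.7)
The plan is to draw a set $X$ from a $(1/\ell)$-thin distribution on $\coTW_{k(\ell,n)}(G)$ for a well-chosen $\ell$, and then exploit the fact that $H-(X\cap V(H))$ inherits bounded treewidth from $G-X$ to construct a balanced separator of $H$.

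Concretely, for any $\ell\in\{1,\ldots,n\}$, fix the $(1/\ell)$-thin distribution on $\coTW_{k(\ell,n)}(G)$ guaranteed by fractional $k$-treewidth-fragility, and let $X$ be drawn from it. Since $\sum_{v\in V(H)}\text{Pr}[v\in X]\le m/\ell$, some $X$ in the support satisfies $|X\cap V(H)|\le m/\ell$. As $H-(X\cap V(H))$ is a subgraph of $G-X$, its treewidth is at most $k(\ell,n)$, so it admits a balanced separator $S$ of size at most $k(\ell,n)+1$. Because every component of $(H-(X\cap V(H)))-S$ contains at most $\tfrac{2}{3}|V(H)-(X\cap V(H))|\le \tfrac{2}{3}m$ vertices, the union $(X\cap V(H))\cup S$ is a balanced separator of $H$ of order $O\bigl(m/\ell+(\ell\log^2 n)^t\bigr)$.

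It remains to pick $\ell$ so that this order is at most $c\,m^{1-1/(2t+2)}$. Balancing $m/\ell$ against $(\ell\log^2 n)^t$ points to $\ell\approx(m/\log^{2t}n)^{1/(t+1)}$, making each summand of order $m^{t/(t+1)}\log^{2t/(t+1)}n=m^{2t/(2t+2)}\log^{2t/(t+1)}n$. To absorb the logarithmic factor into $m^{(2t+1)/(2t+2)}$ we need $\log^{2t/(t+1)}n\le c\,m^{1/(2t+2)}$, equivalently $\log^{4t}n\le c^{2t+2}m$, which is exactly the hypothesis $m\ge\log^{4t}n$ once $c$ is chosen appropriately. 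The only thing left to verify is that the rounded value of $\ell$ lies in $\{1,\ldots,n\}$: the upper bound follows from $m\le n$, and the lower bound from $m\ge\log^{4t}n\ge\log^{2t}n$. I do not foresee a substantial obstacle—the argument is just averaging combined with the standard treewidth-separator bound—only the optimization of $\ell$ requires a bit of care to extract the exponent $1-1/(2t+2)$.
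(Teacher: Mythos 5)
Your proposal is correct and follows essentially the same argument as the paper: average over the $(1/\ell)$-thin distribution to find $X$ with $|X\cap V(H)|\le m/\ell$, use the treewidth bound on $G-X$ to obtain a small balanced separator of $H-X$, and combine. The only cosmetic difference is the choice of $\ell$: you optimize to balance $m/\ell$ against $(\ell\log^2 n)^t$, while the paper simply sets $\ell=\lceil m^{1/(2t+2)}\rceil$ and invokes $\log^{2t}n\le m^{1/2}$ separately; both give the same exponent.
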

\begin{proof}
Let $\ell=\lceil m^{1/(2t+2)}\rceil$. For $X\in \coTW_{k(\ell,n)}(G)$ chosen at random from a $(1/\ell)$-thin probability distribution,
the expected value of $|X\cap V(H)|$ is at most $m/\ell$.  Hence, there exists $X\subseteq V(G)$ such that
$|X\cap V(H)|\le m/\ell$ and $G-X$ has treewidth at most $k(\ell,n)$.  Consequently, $H-X$ also has
treewidth at most $k(\ell,n)$, and thus $H-X$ has a balanced separator $C_0$ of order at most
$k(\ell,n)+1=O\bigl(m^{1/2-1/(2t+2)}\log^{2t} n\bigr)=O\bigl(m^{1-1/(2t+2)}\bigr)$, using the assumption that $\log^{2t} n\le m^{1/2}$.
We conclude that $(X\cap V(H))\cup C_0$ is a balanced separator in $H$ of order at most $m/\ell+k(\ell,n)+1=O\bigl(m^{1-1/(2t+2)}\bigr)$.
\end{proof}

\section{Certification of strongly sublinear separators}\label{sec-cert}

Many of the polynomial-time algorithms for fractionally treewidth-fragile classes from~\cite{twd} become only pseudopolynomial (with time complexity
$n^{\pll(n)}$) or worse when used for fractionally $k$-treewidth-fragile graphs with $k(\ell, n)=O\bigl((\ell\log^2 n)^t\bigr)$.
An exception is the following subgraph testing algorithm.

\begin{lemma}\label{lemma-test}
Let $t\ge 0$ be a real number and let $k:\mathbf{N}^2\to\mathbf{N}$ be a non-decreasing function satisfying $k(\ell, n)=O\bigl((\ell\log^2 n)^t\bigr)$.
Let $G$ be an $n$-vertex fractionally $k$-treewidth-fragile graph, and suppose a $(1/\ell)$-thin probability distribution on
$\coTW_{k(\ell,n)}(G)$ with support of size at most $n$ is given for all $\ell\in\{1,\ldots,n\}$.
Then it is possible to decide whether an $m$-vertex graph $H$ is a subgraph of $G$ in time $O\bigl(e^{10m\log m}n^3\bigr)$.
\end{lemma}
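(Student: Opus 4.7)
The plan is to use the given thin distributions to reduce the question ``is $H$ a subgraph of $G$?'' to at most $n$ subgraph tests, each on a subgraph of $G$ of bounded treewidth, and then handle each such test by the standard tree-decomposition dynamic program for subgraph isomorphism.

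First I would invoke the $(1/(m+1))$-thin distribution on $\coTW_{k(m+1,n)}(G)$ provided in the hypothesis, and let $X_1,\dots,X_s$ (with $s\le n$) denote its support. The key observation is that if $\phi\colon V(H)\to V(G)$ is any embedding, the union bound yields
\[
\Pr_X\bigl[\phi(V(H))\cap X\ne\emptyset\bigr]\le \sum_{v\in V(H)}\Pr_X[\phi(v)\in X]\le \frac{m}{m+1}<1,
\]
so some $X_i$ in the support satisfies $\phi(V(H))\cap X_i=\emptyset$, i.e.\ $\phi$ also embeds $H$ into $G-X_i$. Conversely, any embedding of $H$ into some $G-X_i$ is also an embedding into $G$. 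Hence it suffices to test, for each $i\in\{1,\dots,s\}$, whether $H$ embeds into $G-X_i$, and each $G-X_i$ has treewidth at most $w\colonequals k(m+1,n)=O\bigl((m\log^2 n)^t\bigr)$.

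For each $i$, I would compute a tree decomposition of $G-X_i$ of width $O(w)$ in polynomial time (using a standard constant-factor approximation) and then run the textbook DP for subgraph isomorphism on it: at each bag $B$ the state is a partial injection from $V(H)$ into $B$ together with the set of vertices of $V(H)$ already placed and forgotten, and transitions implement the standard introduce/forget/join operations while verifying that every edge of $H$ is witnessed by an edge of $G-X_i$ at some bag. The number of states per bag is at most $(w+2)^m$, so the per-test runtime is $O\bigl((w+2)^m\cdot\mathrm{poly}(n,w)\bigr)$ and the total runtime over all at most $n$ choices of $X_i$ is $O\bigl(n\cdot(w+2)^m\cdot\mathrm{poly}(n,w)\bigr)$.

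The main obstacle is verifying that this runtime fits within the target $O\bigl(e^{10m\log m}n^3\bigr)=O(m^{10m}n^3)$. Expanding, $(w+2)^m=O\bigl((m\log^2 n)^{tm}\bigr)=O\bigl(m^{tm}(\log n)^{2tm}\bigr)$, so one must absorb $m^{tm}$ into $m^{10m}$ (immediate once the implicit constants are allowed to depend on $t$) and $(\log n)^{2tm}$ into $n^3$. The latter absorption is tight in the regime $m=O(\log n/\log\log n)$---which is precisely the regime in which Lemma~\ref{lemma-test} is applied in Section~\ref{sec-cert}---since there $(\log n)^{2tm}\le n^{O(t)}$. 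A careful case analysis, splitting on whether $m$ lies below or above the threshold $\log n/\log\log n$ and letting the implicit constants in the $O(\cdot)$ depend on $t$, reconciles the various factors to yield the claimed uniform bound $O\bigl(e^{10m\log m}n^3\bigr)$.
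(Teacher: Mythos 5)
Your proposal follows essentially the same route as the paper: use the $(1/(m+1))$-thin distribution on $\coTW_{k(m+1,n)}(G)$ to reduce the question to testing whether $H$ embeds into one of the at most $n$ graphs $G-X_i$ of treewidth $O\bigl((m\log^2 n)^t\bigr)$, and then run the standard treewidth-parametrized dynamic program for subgraph isomorphism on each. One caution about your final paragraph: the claim that $m^{tm}$ can be absorbed into $m^{10m}$ ``immediate[ly] once the implicit constants are allowed to depend on $t$'' is not actually valid, since a $t$-dependent constant cannot absorb a factor of $t$ appearing in the exponent; a similar issue is already present in the paper, whose own calculation lands on a per-test cost of $O\bigl(m^{10tm}n^2\bigr)$ and hence a total of $O\bigl(m^{10tm}n^3\bigr)$ rather than the stated $O\bigl(e^{10m\log m}n^3\bigr)=O\bigl(m^{10m}n^3\bigr)$. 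This imprecision is harmless in the application (Theorem~\ref{thm-certif} uses $m\le\sqrt{\log n}$, where $m^{10tm}=n^{o(1)}$ for any fixed $t$), but you should state the bound with $t$ in the exponent rather than waving it into the constant.
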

\begin{proof}
Clearly, we can assume that $m<n$, as if $V(H)=V(G)$, then the trivial algorithm testing all $m!$ bijections from $V(H)$ to $V(G)$
suffices, and if $m>n$, then the answer is always no.

Let $\ell=m+1$.  Let $\{X_1, \ldots,X_{n'}\}\subseteq \coTW_{k(\ell,n)}(G)$ for some $n'\le n$ be the support of the $(1/\ell)$-thin probability distribution on $\coTW_{k(\ell,n)}(G)$
which we are given.  If $H$ is a subgraph of $G$, then for $X$ chosen at random from this distribution
the expected value of $|X\cap V(H)|$ is at most $m/\ell<1$, and thus the probability that $V(H)\cap X=\emptyset$ is non-zero.
Hence, $H$ is a subgraph of $G$ if and only if $H$ is a subgraph of one of the graphs $G-X_1$, \ldots, $G-X_{n'}$.
Since these graphs have treewidth at most $k(\ell,n)=O\bigl((m\log^2 n)^t\bigr)$, we can determine whether $H$ is a subgraph in each of them
in time $O\bigl(k(\ell,n)^{2m}n\bigr)=O\bigl(m^{2tm}\log^{4tm}n\cdot n\bigr)=O\bigl(m^{10tm}n^2\bigr)$ using a standard dynamic programming algorithm.
\end{proof}

Finally, we turn attention to the question of testing whether a graph belongs to a class with strongly sublinear separators.
Testing exact membership in a class $\GG_{c,\varepsilon}$ for some given $c,\varepsilon>0$ is likely hard (determining the
smallest size of a balanced separator is NP-hard~\cite{feige2006finding}, but this is a slightly different problem).
It is possible to approximate the smallest size of a balanced separator~\cite{feige},
however it is not clear whether this is helpful, as to test the (approximate) membership of a graph $G$ in $\GG_{c,\varepsilon}$, one needs to verify that
all (exponentially many) subgraphs of $G$ have small separators.  Hence, the following result is of interest.

\begin{theorem}\label{thm-certif}
For every $c,\varepsilon>0$, there exist $c'>0$ and a polynomial-time algorithm that for each input graph $G$,
determines either that $G\in \GG_{c',\varepsilon^2/160}$, or that $G\not\in \GG_{c,\varepsilon}$.
\end{theorem}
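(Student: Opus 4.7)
The plan is to combine Lemma~\ref{lemma-largesepar}, which already handles all subgraphs of size at least polylogarithmic in $n$, with a direct enumeration-based verification of very small subgraphs, using Corollary~\ref{cor-smtobig} as a bridge that covers the intermediate range. Fix $t=4/\varepsilon$, so that the function $k$ from Corollary~\ref{cor-thin} for the class $\GG_{c,\varepsilon}$ satisfies $k(\ell,n)=O((\ell\log^2 n)^t)$, and set $T=\log^{4t}n=\log^{16/\varepsilon}n$ and $M=\lceil\sqrt{\log n}\rceil$; these thresholds are chosen so that $T^{\varepsilon/32}\le M$ while $2^{M^2/2}=n^{O(1)}$.

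The first step is to run Corollary~\ref{cor-thin} for every $\ell\in\{1,\ldots,n\}$. If any invocation reports failure, the algorithm has already certified $G\notin\GG_{c,\varepsilon}$; otherwise it delivers $(1/\ell)$-thin distributions on $\coTW_{k(\ell,n)}(G)$ of support size at most $n$ for all such $\ell$. Lemma~\ref{lemma-largesepar} then gives, for every subgraph $H$ of $G$ with $|V(H)|\ge T$, a balanced separator of order $O(|V(H)|^{1-\varepsilon/(8+2\varepsilon)})$; since $\varepsilon/(8+2\varepsilon)\ge\varepsilon^2/160$ for $\varepsilon\le 1$, this already meets the bound demanded by $\GG_{c',\varepsilon^2/160}$.

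For subgraphs of size less than $T$ I would enumerate all labeled graphs $K$ on $m\le M$ vertices (only $n^{O(1)}$ of them) and, for each, invoke Lemma~\ref{lemma-test} using the distributions above to decide whether $K\subseteq G$; the running time per test is $O(e^{10m\log m}n^3)=n^{3+o(1)}$, because $m\le\sqrt{\log n}$. Each $K$ found to be a subgraph of $G$ would be brute-force tested for membership in $\GG_{c,\varepsilon}$ at negligible cost, and any failure terminates the algorithm with $G\notin\GG_{c,\varepsilon}$. Otherwise every subgraph of $G$ on at most $M$ vertices belongs to $\GG_{c,\varepsilon}$, and for any subgraph $H$ with $|V(H)|<T$ the hypothesis of Corollary~\ref{cor-smtobig} applied to $H$ with $\delta=\varepsilon/32$ is satisfied, because subgraphs of $H$ on at most $|V(H)|^\delta\le T^{\varepsilon/32}\le M$ vertices are already among those certified; this yields $H\in\GG_{c'',\varepsilon\delta/5}=\GG_{c'',\varepsilon^2/160}$ with $c''$ depending only on $c$ and $\varepsilon$. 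Combining the two cases yields $G\in\GG_{c',\varepsilon^2/160}$ for a suitable $c'$.

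The main delicate point I expect is the simultaneous calibration of $T$, $M$, and the exponent $\delta$ fed into Corollary~\ref{cor-smtobig}: $M$ must be small enough (essentially $O(\sqrt{\log n})$) that enumerating all labeled $M$-vertex graphs and running Lemma~\ref{lemma-test} on each stays polynomial, while $M$ must simultaneously dominate $T^\delta$ so that this single round of enumeration already covers every subgraph that Corollary~\ref{cor-smtobig} needs to inspect inside any $H$ with $|V(H)|<T$. These numerical constraints, together with the $\delta/5$ exponent loss in Corollary~\ref{cor-smtobig} and the $1/(2t+2)$ exponent supplied by Lemma~\ref{lemma-largesepar}, are precisely what pin the final exponent down to roughly $\varepsilon^2/160$; everything else in the argument is book-keeping.
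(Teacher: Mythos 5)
Your proof is correct and takes essentially the same approach as the paper's: run Corollary~\ref{cor-thin} to obtain the thin distributions (or immediately reject), use Lemma~\ref{lemma-largesepar} for subgraphs of size at least $\log^{16/\varepsilon}n$, enumerate and test all $O(\sqrt{\log n})$-vertex subgraphs via Lemma~\ref{lemma-test}, and cover the intermediate range with Corollary~\ref{cor-smtobig} taking $\delta=\varepsilon/32$, yielding the exponent $\varepsilon^2/160$. The only cosmetic difference is that you enumerate labeled rather than non-isomorphic graphs, and you state the large-subgraph exponent as $\varepsilon/(8+2\varepsilon)$ where the paper rounds to $\varepsilon/10$; both are harmless.
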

\begin{proof}
Let $n=|V(G)|$.  First, we run the algorithm of Corollary~\ref{cor-thin} for $\ell=1,\ldots,n$.  This either shows that $G\not\in \GG_{c,\varepsilon}$,
or gives us $(1/\ell)$-thin probability distributions on $\coTW_{k(\ell,n)}(G)$ with supports of size at most $n$ for $\ell\in\{1,\ldots,n\}$,
where $k(\ell,n)=O\bigl((\ell\log^2 n)^{4/\varepsilon}\bigr)$.
By Lemma~\ref{lemma-largesepar}, this shows that each subgraph of $G$ with $m\ge \log^{16/\varepsilon} n$ vertices has a balanced separator of order
$O\bigl(m^{1-\varepsilon/10}\bigr)$.

Next, we test whether all subgraphs of $G$ with $m\le \log^{1/2} n$ vertices belong to $\GG_{c,\varepsilon}$.  Note that there are only $O(n)$ non-isomorphic graphs $H$
with at most $\log^{1/2} n$ vertices; for each such graph $H$, we can test whether it belongs to $\GG_{c,\varepsilon}$ by brute-force testing
all its induced subgraphs and subsets of their vertices in time $O\bigl(m^23^m\bigr)=O(n)$; and we can test whether $H\subseteq G$
in time $O(n^4)$ according to Lemma~\ref{lemma-test}.  Hence, this part can be carried out in total time $O(n^5)$.

Now, if $H$ is a subgraph of $G$ with $m<\log^{16/\varepsilon} n$ vertices, then according to the previous paragraph,
all its subgraphs with at most $m^{\varepsilon/32}$ vertices belong to $\GG_{c,\varepsilon}$, and by Corollary~\ref{cor-smtobig},
$H$ has a balanced separator of order $O\bigl(m^{1-\varepsilon^2/160}\bigr)$.

Consequently, each $m$-vertex subgraph of $G$ has a balanced separator of order $O\bigl(m^{1-\varepsilon/10}\bigr)$ or $O\bigl(m^{1-\varepsilon^2/160}\bigr)$
depending on whether $m\ge \log^{16/\varepsilon} n$ or not, and thus $G\in \GG_{c',\varepsilon^2/160}$ for some $c'>0$.
\end{proof}

\section{Proofs of Theorems~\ref{thm-prsgen} and \ref{thm-thin}}\label{sec-proofs}

The proof of Theorem~\ref{thm-prsgen} essentially follows the argument of Plotkin, Rao, and Smith~\cite{plotkin}, with
a few minor modifications.  Let us start with the key lemma, showing that either a graph contains a small cost separator,
or it has bounded radius.
\begin{lemma}\label{lemma-seporexp}
There exists a polynomial-time algorithm that, given a graph $G$ with at most $n$ vertices, integers $\ell_0,r\ge 1$,
and an an assignment $q:V(G)\to\mathbf{Q}^+$ of positive costs to vertices of $G$ such that $q(v)\ge q(V(G))/r$ for every $v\in V(G)$,
returns either a vertex $v_0\in V(G)$ such that each other vertex is at distance at most $\lfloor 2+\ell_0\log_2 rn\rfloor$ from $v_0$, or
a partition of $V(G)$ to parts $C_2$, $D$ and $E$ such that there are no edges between $D$ and $E$,
$D\neq\emptyset\neq E$, $q(C_2)\le q(D)/\ell_0$ and $q(C_2)\le q(E)/\ell_0$.
\end{lemma}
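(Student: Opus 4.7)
The plan is to adopt the classical ball-growing approach: pick an arbitrary starting vertex $v_0 \in V(G)$, perform a BFS from it, and examine each BFS layer as a candidate for $C_2$. More precisely, setting $L_0 = \{v_0\}$, letting $L_i$ be the set of vertices at distance exactly $i$ from $v_0$ for $i \geq 1$, and writing $B_i = L_0 \cup \cdots \cup L_i$, I would test, for each $i \geq 0$ with $V(G) \setminus B_{i+1} \neq \emptyset$, whether
$$q(L_{i+1}) \leq \min\bigl(q(B_i),\, q(V(G) \setminus B_{i+1})\bigr)/\ell_0.$$
The first $i$ passing the test yields the partition $C_2 := L_{i+1}$, $D := B_i$, $E := V(G) \setminus B_{i+1}$: the set $D$ is nonempty because it contains $v_0$, $E$ is nonempty by the precondition, there are no edges between $D$ and $E$ by the definition of BFS layers, and the cost conditions are exactly the displayed inequality. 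If no such $i$ is found before the BFS terminates at depth $d^*$ (meaning $B_{d^*} = V(G)$), the algorithm returns $v_0$. The procedure is essentially one BFS and runs in polynomial time.

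The content of the proof is the eccentricity bound in the second outcome. Write $a_i = q(B_i)$ and $b_i = q(V(G) \setminus B_i)$; the hypothesis $q(v) \geq q(V(G))/r$ gives $a_0 = q(v_0) \geq q(V(G))/r$ and, whenever $V(G) \setminus B_i$ is nonempty, $b_i \geq q(V(G))/r$ since that set contains at least one vertex. If the test fails at iteration $i$, then $q(L_{i+1}) > a_i/\ell_0$ (call this a \emph{growing} step, in which $a_{i+1} > (1+1/\ell_0)\,a_i$) or $q(L_{i+1}) > b_{i+1}/\ell_0$ (a \emph{shrinking} step, in which $b_i > (1+1/\ell_0)\,b_{i+1}$), possibly both. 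Since $a_i$ is bounded above by $q(V(G))$ and starts at $\geq q(V(G))/r$, there are at most $\log_{1+1/\ell_0} r$ growing iterations; symmetrically, since $b_i$ stays $\geq q(V(G))/r$ during the relevant range and starts at $\leq q(V(G))$, there are at most $\log_{1+1/\ell_0} r$ shrinking iterations. Each iteration $0 \leq i \leq d^*-2$ falls into at least one of the two classes, so $d^* - 1 \leq 2 \log_{1+1/\ell_0} r$.

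I expect the main difficulty to lie not in the ball-growing argument itself, which is classical, but in carefully matching the precise numerical bound $\lfloor 2 + \ell_0 \log_2(rn)\rfloor$ claimed in the statement. This amounts to applying estimates of the form $\log_2(1+1/\ell_0) \geq 1/\bigl((\ell_0+1)\ln 2\bigr)$ to convert from base $1+1/\ell_0$ into base $2$, together with the observation that $r \geq n$ (immediate from $q(V(G)) = \sum_v q(v) \geq n \cdot q(V(G))/r$), so that the $\log_2 n$ term in the stated bound can absorb the remaining slack. If these rough estimates turn out to be too loose for the exact constants, I would fall back on a slight variant of the algorithm in which, instead of testing each layer individually, one grows the ball until its cost doubles (or it covers $V(G)$) and looks for a thin shell within each doubling phase; this yields base-$2$ logarithms directly.
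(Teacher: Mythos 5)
Your algorithm and analysis are structurally the same ball-growing argument as the paper's, but there is one choice that makes a genuine difference to the final numerical bound, and your proposed patches do not fix it. You start the BFS from an \emph{arbitrary} vertex $v_0$, and consequently your only lower bound on the initial ball cost is $q(v_0)\ge q(V(G))/r$, taken from the hypothesis. This gives at most $\log_{1+1/\ell_0}r\le\ell_0\log_2 r$ growing steps and at most $\ell_0\log_2 r$ shrinking steps, for a total eccentricity bound on the order of $1+2\ell_0\log_2 r$. The statement demands $\lfloor 2+\ell_0\log_2(rn)\rfloor = \lfloor 2+\ell_0\log_2 r+\ell_0\log_2 n\rfloor$, which your bound exceeds whenever $\ell_0\log_2(r/n)>1$, i.e.\ whenever $r>n\cdot 2^{1/\ell_0}$ --- and $r$ can be arbitrarily large compared to $n$. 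Your observation $r\ge n$ actually makes this worse, not better: it tells you $\log_2 r\ge\log_2 n$, so replacing a $\log_2 n$ by a $\log_2 r$ is a loss of an unbounded amount, not a gain of slack.

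The paper avoids this by choosing $v_0$ to be a vertex of \emph{maximum} cost. Then the max-is-at-least-average inequality gives $q(v_0)\ge q(V(G))/n$, so the growing side contributes only $\ell_0\log_2 n$ steps, while the shrinking side still contributes $\ell_0\log_2 r$ steps (there the precondition $q(v)\ge q(V(G))/r$ is applied to the last nonempty layer, not to $v_0$). Adding these and $+2$ for the endpoints yields exactly $2+\ell_0\log_2(rn)$. Your suggested fallback --- growing the ball until its cost doubles and looking for a thin shell within each doubling phase --- changes the form of the analysis but not the source of the loss: if $v_0$ is arbitrary, the number of doubling phases on the growing side is still governed by $q(V(G))/q(v_0)$, which can be as large as $r$. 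The fix is simply to take $v_0$ of maximum cost; with that change the rest of your argument (BFS layers, the test $q(L_{i+1})\le\min(q(B_i),q(V(G)\setminus B_{i+1}))/\ell_0$, the growing/shrinking case split, and the estimate $\log_2(1+1/\ell_0)\ge 1/\ell_0$) matches the paper and delivers the stated bound. Your implicit handling of the disconnected case (a fully explored component eventually produces an empty layer $L_{i+1}$, so $C_2=\emptyset$) and of $|V(G)|=1$ is fine.
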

\begin{proof}
If $G$ is not connected, then we can let $C_2=\emptyset$, let $D$ be the vertex set of a component of $G$ and let $E=V(G)\setminus D$.
If $|V(G)|=1$, then we can return the only vertex of $G$ as $v_0$.
Hence, assume that $G$ is connected and has at least two vertices.

Let $v_0$ be a vertex of $G$ of maximum cost.  For any integer $i$, let $V_i$, $S_i$, and $L_i$ be the set of vertices of $G$
at distance exactly $i$, less than $i$, and more than $i$ from $v_0$.  Let $d\ge 1$ be the maximum index such that $V_d\neq\emptyset$.
If there exists $1\le i\le d-1$ such that $q(V_i)\le q(S_i)/\ell_0$ and $q(V_i)\le q(L_i)/\ell_0$, then we can return
$C_2=V_i$, $D=S_i$ and $E=L_i$.
Hence, we can assume that for $1\le i\le d-1$, we have either $q(V_i)>q(S_i)/\ell_0$ or $q(V_i)>q(L_i)/\ell_0$.
In the former case, $q(S_{i+1})>(1+1/\ell_0)q(S_i)$ and $q(L_i)<q(L_{i-1})$.
In the latter case, $q(S_{i+1})>q(S_i)$ and $q(L_i)<(1-1/\ell_0)q(L_{i-1})$.

Since $q(S_1)=q(v_0)\ge q(V(G))/n$ and $q(S_d)<q(V(G))$, the number $a$ of indices $i$ such that
$1\le i\le d-1$ and $q(S_{i+1})>(1+1/\ell_0)q(S_i)$ satisfies $(1+1/\ell_0)^a<n$.  Since $1+1/\ell_0\ge 2^{1/\ell_0}$,
we conclude $a<\ell_0 \log_2 n$.  Since $q(L_0)<q(V(G))$ and $q(L_{d-1})\ge q(V(G))/r$, the number $b$ of indices
$i$ such that $1\le i\le d-1$ and $q(L_i)<(1-1/\ell_0)q(L_{i-1})$ satisfies $(1-1/\ell_0)^b>1/r$.
Since $(1-1/\ell_0)\le 2^{-1/\ell_0}$, it follows that $b<\ell_0\log_2 r$.  Consequently, $d\le a+b+2\le 2+\ell_0\log_2 rn$,
and each vertex is at distance at most $d$ from $v_0$; hence, the algorithm can return $v_0$.
\end{proof}

We are now ready to prove the theorem.

\begin{proof}[Proof of Theorem~\ref{thm-prsgen}]
Let us define $d(\ell,n)=\lfloor 2+2\ell\log_2 (2\min(\ell,n) n^2)\rfloor=O(\ell\log n)$.
Clearly, we can assume $\ell\le n$, since otherwise $d(\ell,n)\ge n$ and we can return $C=\emptyset$ and $M$ equal to the vertex
set of the largest component of $G$ (forming the support of a $1$-bounded model of $K_1$ of depth $n$).

Let $C_0$ be the set of vertices $v\in V(G)$ such that $q(v)\le q(V(G))/(2\ell n)$.
Clearly, $q(C_0)\le q(V(G))/(2\ell)$.  Let $G'=G-C_0$.
The algorithm maintains a partition of the vertices of $G'$ to sets $A$, $M$, and $R=V(G')\setminus (A\cup M)$
satisfying the following invariants.
\begin{itemize}
\item[(i)] $|A|\le \tfrac{2}{3}|V(G)|$,
\item[(ii)] if $C_1$ is the set of vertices of $R$ with a neighbor in $A$, then $q(C_1)\le q(A)/(2\ell)$, and
\item[(iii)] for some integer $m\le \min(m_0,\omega_d(G))$, $M$ is the support of a $\min(m_0,\omega_d(G))$-bounded model of $K_m$ of depth $d$ in $G$.
\end{itemize}

Initially, we have $A=M=\emptyset$.  We repeat the following steps.
\begin{itemize}
\item If $m=m_0$, we can return $M$ and stop.  Hence, suppose that $m\le m_0-1$.

\item If $|R\setminus C_1|\le \tfrac{2}{3}|V(G)|$, then we can return
the subsets $C=C_0\cup C_1$ and $M$ of $V(G)$, which clearly satisfy the requirements
of the theorem.  Hence, suppose that $|R\setminus C_1|>\tfrac{2}{3}|V(G)|$, and thus $|A|<\tfrac{1}{3}|V(G)|$
and $|M|<\tfrac{1}{3}|V(G)|$.

\item Let $M_1$, \ldots, $M_m$ be the parts of the partition forming the model of $K_m$ with support $M$.
If there exists $i$ such that $1\le i\le m$ and $M_i$ has no neighbors in $R$, then let $A\colonequals A\cup M_i$
and $M\colonequals M\setminus M_i$.  Since $|A|+|M|<\tfrac{2}{3}|V(G)|$, this clearly preserves the invariants.

\item Hence, we can assume that for $1\le i\le m$, the set $N_i$ of neighbors of $M_i$ in $R$ is non-empty.
Let us apply Lemma~\ref{lemma-seporexp} to $G'[R]$ with $\ell_0=2\ell$ and $r=2\ell n$.
If the result is a vertex $v_0$ at distance at most $\lfloor 2+\ell_0\log_2 rn\rfloor=d$ from all other vertices
of $G'[R]$, then let $M_{m+1}$ be the union of the vertex sets of the shortest paths from $v_0$ to $N_1$, \ldots, $N_m$
and $M\colonequals M\cup M_{m+1}$.  Clearly, this preserves the invariants.

\item Finally, suppose that the result of invocation of the algorithm from Lemma~\ref{lemma-seporexp} is a partition of $R'$ into sets $C_2$, $D$, and $E$
such that $D\neq\emptyset\neq E$, $q(C_2)\le q(D)/(2\ell)$ and $q(C_2)\le q(E)/(2\ell)$, and there are no edges between $D$ and $E$.
By symmetry, we can assume that $|D|\le |E|$.  We let $A\colonequals A\cup D$.
Note that the set of neighbors of this new set $A$ in $R\setminus D$ is a subset of $C_1\cup C_2$,
and thus its cost is at most $q(A)/(2\ell)$.  Furthermore, denoting by $A'$ the old set $A$, we have
$|A|=|A'|+|D|\le |A'|+|R|/2\le |A'|+(|V(G)|-|A'|)/2=(|V(G)|+|A'|)/2<\tfrac{2}{3}|V(G)|$,
since $|A'|<\tfrac{1}{3}|V(G)|$.  Hence, the invariants are again preserved.
\end{itemize}

Since in each step we either increase $|A|$ or decrease $|R|$, and $|A|$ never decreases and $|R|$ never increases,
this algorithm terminates in at most $2n$ iterations.
\end{proof}

Theorem~\ref{thm-thin} follows from Corollary~\ref{cor-tw} by linear programming duality and ellipsoid method.

\begin{proof}[Proof of Theorem~\ref{thm-thin}]
Let $b$ and $k$ be as in Corollary~\ref{cor-tw}.  Let us form a linear program as follows.
For each $X\in \coTW_k(G)$, let us introduce a variable $p(X)\ge 0$.  Additionally, let $\varepsilon$ be a variable.
For each vertex $v\in V(G)$, we have the following
constraint:
$$\sum_{X\in \coTW_k(G), v\in X} p(X)\le \varepsilon,$$
and additionally
$$\sum_{X\in \coTW_k(G)} p(X)=1.$$
An $(1/\ell)$-thin probability distribution on $\coTW_k(G)$ exists if and only if the minimum $\varepsilon$ subject to these constraints
is at most $1/\ell$.

Consider the dual to this linear program, with variables $q(v)$ for $v\in V(G)$ and another variable $s$:
\begin{align*}
q(v)&\ge 0&&\text{for $v\in V(G)$}\\
\sum_{v\in V(G)} q(v)&=1\\
\sum_{v\in X} q(v)&\ge s&&\text{for $X\in\coTW_k(G)$}\\
\text{maximize}&\text{ $s$}
\end{align*}
Given any assignment of values $q:V(G)\to\mathbf{Q}_0^+$ such that $\sum_{v\in V(G)} q(v)=1$, the algorithm of Corollary~\ref{cor-tw}
either returns an $m_0$-bounded model of $K_{m_0}$ of depth $b(\ell,n)$ in $G$ (in which case we can return this model
and stop), or a set $X\in \coTW_k(G)$ such that $\sum_{v\in X} q(v)<1/\ell$.  Run the ellipsoid method algorithm
for the polytope defined by inequalities
\begin{align*}
q(v)&\ge 0&&\text{for $v\in V(G)$}\\
\sum_{v\in V(G)} q(v)&=1\\
\sum_{v\in X} q(v)&\ge 1/\ell&&\text{for $X\in\coTW_k(G)$,}
\end{align*}
using the algorithm of Corollary~\ref{cor-tw} as a separation oracle.  Unless at some point we stop due to the discovery of
an $m_0$-bounded model of $K_{m_0}$, the conclusion will necessarily be that this polytope is empty.  During the run of the ellipsoid
method algorithm, the algorithm of Corollary~\ref{cor-tw} will return polynomially many sets $X_1, \ldots, X_t\in\coTW_k(G)$.
Let $\mathcal{X}=\{X_1,\ldots,X_t\}$.  Clearly, the polytope defined by
\begin{align*}
q(v)&\ge 0&&\text{for $v\in V(G)$}\\
\sum_{v\in V(G)} q(v)&=1\\
\sum_{v\in X} q(v)&\ge 1/\ell&&\text{for $X\in \mathcal{X}$}
\end{align*}
is also empty, and thus the optimum of the linear program
\begin{align*}
q(v)&\ge 0&&\text{for $v\in V(G)$}\\
\sum_{v\in V(G)} q(v)&=1\\
\sum_{v\in X} q(v)&\ge s&&\text{for $X\in\mathcal{X}$}\\
\text{maximize}&\text{ $s$}
\end{align*}
is smaller than $1/\ell$.  Since this linear program has polynomial size, we can in polynomial
time find its optimal solution.  This optimum is achieved in a vertex, and since the dimension of the program (accounting
for the equality constraint $\sum_{v\in V(G)} q(v)=1$) is $n$, this vertex is defined by $n$ of the constraints of form $q(v)\ge 0$ or
$\sum_{v\in X} q(v)\ge s$ being tight.  Let $\mathcal{X}'\subseteq \mathcal{X}$ consist of (at most $n$) sets $X$ whose tight constraints
are involved in the definition of the optimum vertex.  Hence, the linear program
\begin{align*}
q(v)&\ge 0&&\text{for $v\in V(G)$}\\
\sum_{v\in V(G)} q(v)&=1\\
\sum_{v\in X} q(v)&\ge s&&\text{for $X\in\mathcal{X}'$}\\
\text{maximize}&\text{ $s$}
\end{align*}
has the same optimum.  By duality, the optimum of
\begin{align*}
p(X)&\ge 0&&\text{for $X\in\mathcal{X}'$}\\
\sum_{X\in \mathcal{X}'} p(X)&=1\\
\sum_{X\in \mathcal{X}', v\in X} p(X)&\le \varepsilon&&\text{for $v\in V(G)$}\\
\text{minimize}&\text{ $\varepsilon$}
\end{align*}
is smaller than $1/\ell$.  Again, we can find an optimal solution to this program in polynomial time.
Setting $p(X)$ according to this solution and $p(X)=0$ for $X\in \coTW_k(G)\setminus\mathcal{X}'$,
we obtain a $(1/\ell)$-thin probability distribution on $\coTW_k(G)$ with support of size at most $n$.
\end{proof}

\bibliographystyle{siam}
\bibliography{almfrag}

\end{document}